\numberwithin{equation}{section}
\newtheorem{theorem}{Theorem}[section]
\newtheorem{lemma}[theorem]{Lemma}
\newtheorem{proposition}[theorem]{Proposition}
\theoremstyle{definition}
\newtheorem{definition}[theorem]{Definition}
\newtheorem{example}[theorem]{Exemple}
\newtheorem{examples}[theorem]{Exemples}
\newtheorem{remark}[theorem]{Remark}
\theoremstyle{plain}% default
\def\BB{\ensuremath\mathbb{B}}
\def\CC{\ensuremath{\mathbb{C}}}
\def\RR{\ensuremath{\mathbb{R}}}
\date{}
\begin{document}
\pagestyle{plain}
\title{Interior Kasparov products for $\varrho$-classes on Riemannian foliated bundles}
	\author
	{Vito Felice Zenobi}
	\maketitle

	\begin{abstract}
		Let $\iota\colon \mathcal{F}_0\to\mathcal{F}_1 $ be a suitably oriented inclusion of foliations over a manifold $M$, then we extend the construction of the lower shriek maps given by Hilsum and Skandalis to adiabatic deformation groupoid C*-algebras: we construct an asymptotic morphism $(\iota_{ad}^{[0,1)})_!\in E_n\left(C^*(G_{ad}^{[0,1)}), C^*(H_{ad}^{[0,1)})\right)$, where  $G$ and $H$ are the monodromy groupoids associated with $\mathcal{F}_0$ and $\mathcal{F}_1$ respectively. Furthermore, we prove an interior Kasparov product formula for foliated $\varrho$-classes associated with longitudinal metrics of positive scalar curvature in the case of Riemannian foliated bundles. 
	\end{abstract}
	
	\medskip
	
	 Mathematical Subject Classification 2020: \textbf{53C12, 19K35, 22A22.}
	\tableofcontents 
\section{Introduction}

Let $M$ be a smooth compact manifold with fundamental group $\pi_1(M)$ and let $\widetilde{M}$ be its universal covering.
 In \cite{Higson-Roe} the authors introduced the following long exact sequence of K-theory groups
 \begin{equation}\label{hr-seq}
 \xymatrix{\cdots\ar[r]& K_*(C^*(\pi_1(M)))\ar[r]& \mathrm{S}^{\pi_1(M)}_{*+1}(\widetilde{M})\ar[r]& K^{\pi_1(M)}_{*+1}(\widetilde{M})\ar[r]&\cdots}
\end{equation}
known as the Higson-Roe exact sequence or the \emph{analytic surgery exact sequence}. The group on the right is the equivariant K-homology of $\widetilde{M}$; on the left we have the K-theory of the group C*-algebra, the receptacle of the index invariants; finally, the middle term is the so-called \emph{structure group} whom the K-theoretic secondary invariants belong to.

These invariants, usually called $\varrho$-classes, were introduced in  \cite{Higson-Roe}. They are used to detect secondary geometric or topological structures, such as topological/differential manifold structures or metrics with positive scalar curvature, see for instance \cite{PS, PS2, XY,XY2, WXY, Z}. All these papers treat the subject within the methodologies of coarse geometry.

In \cite{zenobi-ad}, K-theoretic secondary invariants were adapted to the context of Lie groupoids. 
Indeed, let $G\rightrightarrows M$ be a Lie groupoid over $M$, $AG$ its Lie algebroid and $G_{ad}^{[0,1)}$ its adiabatic deformation, whose precise definition is not important for the moment. Then we can consider the following long exact sequence of K-groups 
\begin{equation}\label{ad-seq}
\xymatrix{\cdots\ar[r]&K_*(C^*(G)\otimes C_0(0,1))\ar[r]&K_*(C^*(G^{[0,1)}_{ad}))\ar[r]&K_*(C^*(AG))\ar[r]&\cdots}.
\end{equation}
In \cite{Z-CvsAd} it is proved that, when $G$  is the fundamental groupoid of $M$, the long exact sequence \eqref{ad-seq}  is isomorphic to the Higson-Roe exact sequence \eqref{hr-seq}
 and that the $\varrho$-classes defineded in \cite{zenobi-ad} correspond to those defined in \cite{PS}. 

 This was a natural setting for a systematized and conceptual generalization to foliations - both regular and singular ones, see also \cite{Piazza-Zenobi} - of the $\varrho$-invariants in K-theory. 
It is worth to mention that, if for long time the coarse geometric methods were not fit to treat foliations, in the recent works \cite{BR1,BR2} Benameur and Roy contributed to advance in this direction.

The specific motivation of this work is to improve the study of Kasparov products for secondary classes. This topic, in the general setting of Lie groupoids, was inspired by a seminal work of Siegel \cite{Sieg} where an exterior product involving secondary classes was defined and then independently generalized to the equivariant setting in \cite{Z} and \cite{Zeidler}. It is given by a map as follows
\begin{equation}
\boxtimes\colon\mathrm{S}^{\pi_1(M)}_{i}(\widetilde{M})\times K_{j}(N)\to \mathrm{S}^{\pi_1(M)\times\pi_1(N)}_{i+j}(\widetilde{M}\times\widetilde{N})
\end{equation}
and it gives a factorization of a $\varrho$-class $\varrho(M\times N)$  as the product of the analogous $\varrho$-class $\varrho(M)$  on $M$ and the fundamental class $[N]$. 
In \cite[Section 3.6]{zenobi-ad} a similar product was introduced in  the more general context of Lie groupoid
\begin{equation}
\boxtimes\colon K_i(C^*(G^{[0,1)}_{ad}))\times K_j(C^*(AH))\to K_{i+j}(C^*((G\times H)^{[0,1)}_{ad}))
\end{equation}
and similar product formulas are proved too. 
The main geometrical motivation of this construction is to obtain stability results about secondary structures, such as rigidity results for topological structures or for metrics with positive scalar curvature.

In the coarse geometric setting the topic  was deepened in other works such as \cite{EWZ,W}.  
An interesting advancement appeared in the recent paper \cite{Hongzhi-Jinmin}, where the authors consider the case of a manifold $M$ fibered over another manifold $B$: they define an interior product between the longitudinal secondary class  on $M$ and the K-homology class of $B$ and prove a product formula for the total secondary invariant of $M$. 

In the present article we cope the task of putting the ad hoc construction given in \cite{Hongzhi-Jinmin} into the more systematic setup offered by \cite{HSk}. In this classical paper, Hilsum and Skandalis provide the  construction of a KK-element associated with a K-oriented map of foliated manifolds and they proved the functoriality (namely what, in this text, we are calling interior product formulas) of their construction for the index classes associated with longitudinal operators. 

Let us give a quick overview of what is developed in the present paper. First, let us fix a manifold $M$ and assume that there is a regular foliation associated with an involutive sub-bundle $\mathcal{F}\subset TM$ over it. We can then consider the monodromy groupoid $G:=\mathrm{Mon}(M,\mathcal{F})\rightrightarrows M$ associated with this foliation. As before,  we can associate with $G$ a long exact sequence in K-theory as in \ref{ad-seq}.
Consider now for instance the inclusion of $AG$, the Lie algebroid of $G$, into $TM$: this inclusion is an homomorphism of Lie algebroids which integrates to an immersion of Lie groupoids $\iota\colon G\looparrowright H$. Here $H$ denotes the fundamental groupoid $\widetilde{M}\times_{\pi_1(M)}\widetilde{M}\rightrightarrows M$. 

The focus of this paper is to construct a mapping of the exact sequence \eqref{ad-seq} associated with $G$ to the one associated with $H$, isomorphic to the classical Higson-Roe exact sequence, as follows
\begin{equation}\label{mapping}
\xymatrix{\cdots\ar[r]&K_*(C^*(G)\otimes C_0(0,1))\ar[r]\ar[d]^{\iota_!}&K_*(C^*(G^{[0,1)}_{ad}))\ar[r]\ar[d]^{(\iota_{ad}^{[0,1)})_!}&K_*(C^*(AG))\ar[r]\ar[d]^{d\iota_!}&\cdots\\
	\cdots\ar[r]&K_*(C^*(H)\otimes C_0(0,1))\ar[r]&K_*(C^*(H^{[0,1)}_{ad}))\ar[r]&K_*(C^*(AH))\ar[r]&\cdots}.
\end{equation}
where the vertical maps are given by asymptotic morphisms. 
 This was already done by Hilsum and Skandalis in \cite{HSk} for the K-groups on the left and the right hand side of \eqref{mapping}. Here we use a mapping cone construction to generalize their construction to the middle term of \eqref{ad-seq}. Moreover, whereas the lower shriek maps of Hilsum and Skandalis where defined only for ameanable groupoids by means of KK-elements, our maps are defined for general groupoids: this is only due to the fact that we can implement this mapping through asymptotic morphisms in E-theory whereas, at the time when \cite{HSk} was written, E-theory did not exist yet. 

The definition of these asymptotic morphisms is only a partial aim of this paper. Indeed, the main result is given by Theorem \ref{main-th}: take a manifold $M$ which is the total space of a Riemannian foliated bundle associated with $\mathcal{F}\subset TM$. Assume that there is a longitudinal metric $g^{\mathcal{F}}$ with positive scalar curvature and let $\varrho(g^{\mathcal{F}})\in K_*(C^*(\mathrm{Mon}(M,\mathcal{F})_{ad}^{[0,1)}))$ be the corresponding $\varrho$-class. Let us then consider the immersion of Lie groupoids $\iota\colon\mathrm{Mon}(M,\mathcal{F})\looparrowright \widetilde{M}\times_{\pi_1(M)}\widetilde{M}$ obtained by integrating the inclusion of Lie algebroids $\mathcal{F}\hookrightarrow TM$. Assume that  the normal bundle admits a $Spin$ structure  and let  \begin{equation}\label{!}(\iota_{ad}^{[0,1)})_!\in E_n\left(C^*(\mathrm{Mon}(M,\mathcal{F}))_{ad}^{[0,1)}),C^*(\widetilde{M}\times_{\pi_1(M)}\widetilde{M}_{ad}^{[0,1)})\right)\end{equation} be the asymptotic morphism associated with it. Let $g^M:= g^{\mathcal{F}}\oplus g^{\mathcal{N}}$ be a suitable metric of $TM$ with positive scalar curvature. Then we can prove the following factorization formula  for the $\varrho$-class of $g^M$
\begin{equation}\label{pf}
\varrho(g^M)=\varrho(g^\mathcal{F})\otimes (\iota_{ad}^{[0,1)})_! \in K_{*+n}\left(C^*(\widetilde{M}\times_{\pi_1(M)}\widetilde{M}_{ad}^{[0,1)})\right).
\end{equation}  

While the construction of \eqref{!} can be given for the most general case of suitably oriented immersion of foliations, proving the product formula \eqref{pf} presents more difficulties. The main issue is that we are dealing  with interior Kasparov product between elements in KK-groups of mapping cone C*-algebras. In particular, we need to prove that the Kasparov product of a degenerate Kasparov bimodule on the foliation and a transverse non-degenerate Kasparov bimodule corresponds to a precise degenerate Kasparov bimodule on the whole manifold. The key technical result to solve this problem is \cite[Lemma 11]{Sk-remarks}, which allows to construct an operatorial homotopy through invertible operators - namely through degenerate bimodules - between the longitudinal operator and the total one, whenever one has a fine estimation from below of their commutator. Fundamental to this end is the calculation of Bismut and Cheeger in \cite[Equation 4.46]{BismutCheeger}, see also \cite[Lemma 17]{KvS2}, for the simple foliation given by a surjective submersion $\pi\colon M\to B$. 
Transposing all that to the equivariant context and passing to quotients leads to prove formula  \eqref{pf} in the case of a Riemannian foliated bundle. The general case stays open. 

As a by-product of the construction \eqref{!} we also obtain a generalization to the context of general foliations of the main theorem in \cite{GXY} about the functoriality of $\varrho$-classes with respect to maps of principal bundle over the same manifold $M$. 

\bigskip

The paper is organized as follows: in Section \ref{s1} we introduce the elementary definitions and notations  about Lie groupoids and their C*-algebras; in Section \ref{s2} the elementary KK-theoretic tools are recalled: the KK-theory of mapping cone C*-algebras, the definition of the bimodules implementing the Bott periodicity, the Thom isomorphisms in KK-theory defined in \cite{HSk}, the asymptotic morphism associated with a deformation groupoid; in Section \ref{longitudinal-classes} we recall the definitions of the primary invariants, the secondary invariants and the fundamental classes as elements of the first, the second and the third K-theory group in \eqref{ad-seq} respectively;
Section \ref{s4} is dedicated to the construction of the elements \eqref{!} both in the case of an isometric and an almost isometric immersion of foliations; Section \ref{s5} is devoted to set the previous constructions in the equivariant setting and to provide a precise relation between equivariant objects with associated non-equivariant objects over quotients; in Section \ref{s6} we give the proof of the main results of the paper, namely the interior product formulas, and of the functoriality of $\varrho$-classes for general surjective \'{e}tale maps of foliation groupoids, which generalizes \cite[Theorem 1.1]{GXY} and simplifies its proof; finally, in Section \ref{s7} we list the problems which remain unsolved and we propose further directions of research starting from this paper. 

\bigskip

 \section{Preliminaries}\label{s1}
\subsection{Definitions and notations}

\begin{definition} Let $G$ and $G^{(0)}$ be two sets.  A groupoid structure on $G$ over $G^{(0)}$ is given by the following morphisms:
	\begin{itemize}
		
		\item Two maps: $r,s: G\rightarrow G^{(0)}$,
		which are respectively the range and  source map.
		\item A map $u:G^{(0)}\rightarrow G$ called the unit map that is a section for both $s$ and $r$. We can identify $G^{(0)}$ with its image
		in $G$. 
		\item An involution: $ i: G\rightarrow G
		$, $  \gamma  \mapsto \gamma^{-1} $ called the inverse
		map. It satisfies: $s\circ i=r$.
		\item A map $ p: G^{(2)}  \rightarrow  G
		$, $ (\gamma_1,\gamma_2)  \mapsto  \gamma_1\cdot \gamma_2 $
		called the product, where the set 
		$$G^{(2)}:=\{(\gamma_1,\gamma_2)\in G\times G \ \vert \
		s(\gamma_1)=r(\gamma_2)\}$$ is the set of composable pair. Moreover for $(\gamma_1,\gamma_2)\in
		G^{(2)}$ we have $r(\gamma_1\cdot \gamma_2)=r(\gamma_1)$ and $s(\gamma_1\cdot \gamma_2)=s(\gamma_2)$.
	\end{itemize}
	
	The following properties must be fulfilled:
	\begin{itemize}
		\item The product is associative: for any $\gamma_1,\
		\gamma_2,\ \gamma_3$ in $G$ such that $s(\gamma_1)=r(\gamma_2)$ and
		$s(\gamma_2)=r(\gamma_3)$ the following equality
		holds $$(\gamma_1\cdot \gamma_2)\cdot \gamma_3= \gamma_1\cdot
		(\gamma_2\cdot \gamma_3)\ .$$
		\item For any $\gamma$ in $G$: $r(\gamma)\cdot
		\gamma=\gamma\cdot s(\gamma)=\gamma$ and $\gamma\cdot
		\gamma^{-1}=r(\gamma)$.
	\end{itemize}
	
	We denote a groupoid structure on $G$ over $G^{(0)}$ by
	$G\rightrightarrows G^{(0)}$,  where the arrows stand for the source
	and target maps. 
\end{definition}

We will adopt the following notations: $$G_A:=
s^{-1}(A)\ ,\ G^B=r^{-1}(B) \ \mbox{ and } G_A^B=G_A\cap G^B \,$$
in particular if $x\in G^{(0)}$, the  $s$-fiber (resp. 
$r$-fiber) of $G$ over $x$ is $G_x=s^{-1}(x)$ (resp. $G^x=r^{-1}(x)$).

\begin{definition}
	
	We call $G$ a Lie groupoid when $G$ and $G^{(0)}$ are second-countable smooth manifolds
	with $G^{(0)}$ Hausdorff, the structural homomorphisms are smooth.
\end{definition}

\bigskip

Let us now consider a commutative diagram of Lie
groupoids and vector bundles as follows:
\begin{equation}\label{vb}
\xymatrix{
	E\ar[d]\ar@<+0.4ex>[r]\ar@<-0.4ex>[r]& E^{(0)}\ar[d]\\
	G\ar@<+0.4ex>[r]\ar@<-0.4ex>[r]& G^{(0)}
	}
\end{equation}
Here we mean that $E\rightrightarrows E^{(0)}$ is a Lie groupoid (with source, target, multiplication,
identity, and inverse maps denoted by $\tilde{s}$, $\tilde{r}$, $\tilde{m}$, $\tilde{u}$, and $\tilde{i}$), $G\rightrightarrows G^{(0)}$ is a Lie groupoid (with source,
target, multiplication, identity, and inverse maps denoted by $s$, $t$, $m$, $u$, and $i$), $E\to G$ is a
vector bundle (with projection map and zero section denoted by  $\tilde{q}$ and $\tilde{0}$), $E^{(0)}\to G^{(0)}$ is a vector bundle bundle (with projection map and zero section  denoted by $q$ and $0$) and such that $q\tilde{s}=s\tilde{q}$ and
$q\tilde{t}=t\tilde{q}$.

\begin{definition}
	A $\mathcal{VB}$-groupoid is a commutative diagram of Lie groupoids and
vector bundles like (3.1) such that the following conditions hold:
\begin{enumerate}
\item $(\tilde{s}, s)$ and $(\tilde{t}, t)$ are  morphisms of vector bundles.
\item $(\tilde{q}, q)$ is a morphism of Lie groupoids.

\item Interchange law:
\[
(\eta_1+\eta_3)(\eta_2+ \eta_4)= \eta_1\eta_2 +\eta_3\eta_4
\] 
for any $(\eta_1, \eta_2), (\eta_3, \eta_4)\in E^{(2)}$ such that $\tilde{q}(\eta_1)=\tilde{q}(\eta_3)$ and $\tilde{q}(\eta_2)=\tilde{q}(\eta_4)$.
\end{enumerate}
\end{definition}

\begin{remark}
Recall that the usual definition includes the following technical
condition. Consider the vector bundle $s^*E^{(0)}=E^{(0)}\times_s G$ over $G$ 
and the map $p\colon E\to s^*E^{(0)}$.
The technical condition is that $p$ is required to be a surjective submersion.
But one can prove that this is implied by the definition. 
\end{remark}

\begin{definition}
Consider the sub-bundle  $\ker p\subset E$. Then define the (right) core of $E$ as the vector bundle over $G^{(0)}$ given by $C:=u^*(\ker p)$.   
\end{definition}

For any pair $(\gamma_1,\gamma_2)\in G^{(2)}$, right-multiplication  by $\tilde{0}(\gamma_2)$ produces a linear isomorphism from $(\ker p)_{\gamma_1}$ to $(\ker p)_{\gamma_1\gamma_2}$. In particular, for any $\gamma\in G$, right-multiplication by $\tilde{0}(\gamma)$ produces a linear isomorphism from $C_{r(\gamma)}$ to $(\ker p)_{\gamma}$. Hence we have a natural isomorphism of vector bundles over $G$ between $r^* C$ to $\ker p$ given by $(\gamma,c)\mapsto c\cdot \tilde{0}(\gamma)$. 

So, for any section $h\colon s^*E^{(0)}\to E$, we have an isomorphism of vector bundles \begin{equation}\label{rh-iso} s^*E^{(0)}\oplus r^*C\cong E \quad \mbox{given by} \quad (\gamma, e\oplus c)\mapsto h(e) + c\cdot \tilde{0}(\gamma).\end{equation}

\begin{definition}
	A \emph{right-horizontal lift} of the $\mathcal{VB}$-groupoid $E\rightrightarrows E^{(0)}$ is given by a section $h\colon s^*E^{(0)}\to E$ of $p$ such that
	$$h(u(x), e)= \tilde{u}(e)\quad \forall x\in G^{(0)}\, \mbox{and}\, e\in E^{(0)}_x.$$
\end{definition}
Observe that a right-horizontal lift $h$ is such that the associated isomorphism \eqref{rh-iso} restricts to the natural splitting 
\[
E_{u(x)}\cong C_x\oplus \tilde{u}(E^{(0)}_x)\quad \forall x\in G^{(0)}.
\]
\begin{remark}\label{affine}
It is important to point out that, by \cite[Lemma 5.9]{GS-AM},
	the space of  right-horizontal lifts  of a $\mathcal{VB}$-groupoid is an affine space.
\end{remark}

\begin{remark}\label{rmk-core}
	Observe that the core bundle $C\to G^{(0)}$ is a $G$-bundle and the action of $\gamma\in G$ which sends $C_{r(\gamma)}\to C_{s(\gamma)}$ is given by 
	$c\mapsto  \tilde{0}(\gamma^{-1})\cdot c\cdot \tilde{0}(\gamma)$.
\end{remark}

%In particular we obtain an action of $G$ on $E$ through $\mathcal{VB}$-groupoid automorphisms. \blue{To be checked and explicitly given}
\subsection{The deformation to the normal cone}

Let $\iota\colon M_0\to M$ be an immersion of  smooth manifolds, with normal bundle $N_\iota$.
As a set, the deformation to the normal cone is \begin{equation}
DNC^{\RR}(M, M_0):=N_\iota \times\{0\}\sqcup M\times\RR^*.
\end{equation}

\noindent\textbf{Notation:} \emph{we will consider restrictions of the $DNC$ to various subsets of $\RR$, in particular we will denote by $DNC$, without any decoration, the restriction to $[0,1]$ and by $DNC^{[0,1)}$ the restriction to the interval $[0,1)$ open at $1$.}

In order to recall its smooth structure, we fix an exponential map, which is a diffeomorphism $\theta$ from
a neighbourhood $V'$ of the zero section $M_0$ in $N$ to a neighbourhood $V$ of $M_0$ in $M$.
We may cover $DNC(M,M_0)$ with two open sets $M\times \RR^*$, with the product differentiable structure, and $W=N\times {0}\sqcup V\times\RR^*$,
endowed with the differentiable structure for which the map 
\begin{equation}\label{adiabatictopology}
\Psi\colon\{(m,\xi,t)\in N\times\RR\,|\,(m,t\xi)\in V'\}\to W
\end{equation}
given by $(m,\xi,t)\mapsto(\theta(m, t\xi),t)$,
for $t\neq0$, and by $(m,\xi,0)\mapsto(m,\xi,0)$, for $t=0$, is a diffeomorphism. One can verify that the transition map on the overlap of these two charts is smooth, see for instance \cite[Section 3.1]{HSk}.

Let us consider a commutative square of the following type
\begin{equation}\label{dnc-immersion}
\xymatrix{M_0\ar[r]^\iota\ar[d]_{f_0}& M\ar[d]^{f}\\
	            M_0'\ar[r]^{\iota'}& M'}
\end{equation}
where $\iota$ and $\iota'$ are immersions, $f$ and $f_0$ are smooth maps. 
Then we obtain the smooth map $DNC(f,f_0)\colon DNC(M, M_0)\to DNC(M',M_0')$, defined by 
\begin{equation*}
\begin{split}
DNC(f,f_0)(x,t)&=(f(x),t)\quad \mbox{for}\, t\neq0,\\
DNC(f,f_0)(x,\xi,0)&=(f(x),df_x(\xi),0)\quad \mbox{for}\, t=0.
\end{split}
\end{equation*}

\begin{remark}\label{imm-sub}
Observe that, as for instance in \cite[Proposition 1.1]{Mohsen}, $DNC(f,f_0)$ is
\begin{itemize}
\item 	a submersion if and only if both $f$ and $f_0$ are submersions,
\item   an immersion if and only if $f$ is an immersion and $T_xM_0=(df_0)_x^{-1}(T_{f_0(x)}M'_0)$.
\end{itemize}
When there is no risk of confusion, we will denote $DNC(f,f_0)$ just by $DNC(f)$.
\end{remark}

\bigskip

\begin{definition} \label{def-dnc}
	Let $\iota\colon H\looparrowright G$ be a smooth immersion of Lie groupoids and let us denote its restriction to the units by $\iota_0\colon H^{(0)}\hookrightarrow G^{(0)}$.  
	\begin{enumerate}
\item The normal bundle $N_\iota$ carries a Lie groupoid structure over $N_{\iota_0}$. 
We denote this groupoid by  $\mathcal{N}_\iota\rightrightarrows N_{\iota_0} $. We will also use the following notation $\mathcal{N}_H^G$ to denote $\mathcal{N}_\iota$.
\item The smooth manifold $DNC(G,H)$ is naturally endowed with a structure of Lie groupoid over $DNC(G^{(0)},H^{(0)})$. The source and range morphisms are given by $DNC(s)$ and $DNC(r)$ respectively. Furthermore, the set of composable arrows $DNC(G,H)^{(2)}$ identifies with $DNC(G^{(2)},H^{(2)})$ and the product is given by $DNC(m^G,m^H)$.
\item For the deformation to the normal cone associated with the unit morphism $u\colon G^{(0)}\to G$ we will use the following notation $G_{ad}:=DNC(G,G^{(0)} )\rightrightarrows G^{(0)}\times[0,1]$ and we will call it the \emph{adiabatic groupoid} associated with $G$. The same notation applies to morphisms, namely $f_{ad}:=DNC(f)$. Finally, recall that in this particular case  $\mathcal{N}_u$ is just $\mathfrak{A}G$, the Lie algebroid of $G$.
\end{enumerate}  
\end{definition}

\begin{remark}
	Notice that the projection $\mathcal{N}_\iota\to H$ is a groupoid morphism and it follows that $\mathcal{N}_\iota$ is a $\mathcal{VB}$-groupoid over $H$. 
\end{remark}

\subsection{Groupoid C*-algebras}
We can associate with a Lie groupoid $G$ the *-algebra $C^\infty_c(G,\Omega^{\frac{1}{2}}(\ker ds\oplus\ker dr))$ of the compactly supported sections of the half densities bundle associated with $\ker ds\oplus\ker dr$, with:

\begin{itemize}
	\item the involution given by $f^*(\gamma)=\overline{f(\gamma^{-1})}$;
	\item and the convolution product given by $f*g(\gamma)=\int_{G_{s(\gamma)}} f(\gamma\eta^{-1})g(\eta)$.
\end{itemize}  

For all $x\in G^{(0)}$ the algebra $C^\infty_c(G,\Omega^{\frac{1}{2}}(\ker ds\oplus\ker dr))$ can be represented on 
$L^2(G_x,\Omega^{\frac{1}{2}}(G_x))$ by 
\[\lambda_x(f)\xi(\gamma)=\int_{G_{x}} f(\gamma\eta^{-1})\xi(\eta), \]
where $f\in C^\infty_c(G,\Omega^{\frac{1}{2}}(\ker ds\oplus\ker dr))$ and $\xi\in L^2(G_x,\Omega^{\frac{1}{2}}(G_x))$.

\begin{definition}
	The reduced C*-algebra of a Lie groupoid G, denoted by $C^*_r(G)$, is the completion of $C^\infty_c(G,\Omega^{\frac{1}{2}}(\ker ds\oplus\ker dr))$ with respect to the norm
	\[
	||f||_r=\sup_{x\in G^{(0)}}||\lambda_x(f)||.
	\]
	
	The full C*-algebra of $G$ is the completion of 
	$C^\infty_c(G,\Omega^{\frac{1}{2}}(\ker ds\oplus\ker dr))$ with respect to all continuous representations.
\end{definition}

\begin{remark}\label{fullvsred}
	From now on, if $X$ is a $G$-invariant closed subset of $G^{(0)}$ we will call
	$e_X\colon C^\infty_c(G)\to C^\infty_c(G_{|X})$ the restriction map to $X$.
	That gives an exact sequence of full groupoid C*-algebras
	\[
	\xymatrix{0\ar[r]& C^*(G_{|G^{(0)}\setminus X})\ar[r]&C^*(G)\ar[r]& C^*(G_{|X})\ar[r]&0},
	\]
	but in general this is not true for the reduced ones:  
	the reader can find examples of this phenomenon in \cite{HLS}.
	Let us precise that in what follows we will mainly deal with the reduced groupoid C*-algebras, because there are more details to check in the reduced situation. But everything we are going to  prove about the reduced C*-algebras works for the full C*-algebras too.
\end{remark}

\section{Some KK-elements}\label{s2}
This section is dedicated to recall the technical constructions in KK-theory which will be used all along this paper.

\subsection{Mapping cones and KK-classes}

\noindent\textbf{Degenerate bimodules.} Recall that a Kasparov bimodule $(\mathcal{H},F)\in \mathbb{E}(A, B)$ is said to be degenerate if 
$$[a,F]=a(F^2-1)=a(F-F^*)=0,\quad \forall a\in A$$
and that $\mathbb{D}(A, B)$ denotes the set of all
degenerate Kasparov $A,B$-bimodules.
In  this case $(\mathcal{H}\otimes C_0[0,1), F\otimes 1)\in \mathbb{E}(A, B[0,1])$ produces a canonical homotopy from $(\mathcal{H},F)\in \mathbb{E}(A, B)$ to the zero bimodule $(0,0)$. Namely $\mathbb{D}(A, B)$  is star-shaped.
Thus, from now on, we will do a little abuse by considering any degenerate bimodule as the zero bimodule. In particular, a bimodule $(\overline{\mathcal{H}},\overline{F})\in \mathbb{E}(A,B[0,1])$ such that $(\mathrm{ev}_0)_*(\overline{\mathcal{H}},\overline{F})$ is degenerate will define in a canonical way an element in $\mathbb{E}(A,B[0,1))$.

\medskip

\noindent\textbf{Gluing bimodules \cite[Section 2.3]{AAS}.}
Let $\varphi_0\colon B_0\to C$ and $\varphi_1\colon B_1\to C$ be two *-homomorphisms. We can then construct the pull-back C*-algebra $$B_0\oplus_C B_1:=\{(b_0,b_1)\in B_0\oplus B_1 | \varphi_0(b_0)=\varphi_1(b_1)\}.$$
Consider now two bimodules $(\mathcal{H},F)\in \mathbb{E}(A,B_0)$ and $(\mathcal{H}',F')\in \mathbb{E}(A,B_1)$. Let us assume that $\varphi_0$ and $\varphi_1$ are surjective *-homomorphism and recall that this implies that, for instance, $\mathcal{H}\otimes_{\varphi_0}C$ is a quotient of $\mathcal{H}$ (let us keep the notation $\varphi_0$ for the quotient map).
\begin{definition}
	Let $w$ be a unitary equivalence between $(\varphi_0)_*(\mathcal{H},F)$ and $(\varphi_1)_*(\mathcal{H}',F')$. Then define the Kasparov bimodule
	\begin{equation}\label{diamond}
	(\mathcal{H},F)\diamond^w_{\varphi_0,\varphi_1} (\mathcal{H}',F')\in \mathbb{E}(A, B_0\oplus_C B_1)
	\end{equation}
 to be given by the pair
 \begin{itemize}
\item $\mathcal{H}\diamond \mathcal{H}':=\{(\eta,\xi)\in\mathcal{H}\times \mathcal{H}'\,|\,\varphi_0(\xi)=w\varphi_1(\eta) \}$,
\item $F\diamond F'(\eta,\xi):=(F\eta,F'\xi)$.
 \end{itemize}
\end{definition}

\medskip

\noindent\textbf{Mapping cones.}
Let $\varphi\colon A\to B$ be a *-homomorphism. Associated with $\varphi$ we have the mapping cylinder C*-algebra 
$$\mathcal{Z}_{\varphi}=:\{(a,f)\in A\oplus B[0,1]\,|\, \varphi(a)=f(0)\}$$ 
and the mapping cone C*-algebra 
$$\mathcal{C}_{\varphi}=:\{(a,f)\in A\oplus B[0,1)\,|\, \varphi(a)=f(0)\}$$  
which is an ideal in $\mathcal{Z}_{\varphi}$.

A typical element in $\mathbb{E}(D,\mathcal{C}_{\varphi})$ is given by a pair \begin{equation}\label{mc-class}
\left((\mathcal{H}_{A}, G_{A}), (\overline{\mathcal{H}},\overline{G})\right), \end{equation}
 composed by a bimodule $(\mathcal{H}_A, G_A)\in \mathbb{E}(D,A)$ and a bimodule $(\overline{\mathcal{H}},\overline{G})\in \mathbb{E}(D,B[0,1])$ which is a homotopy between $\varphi_*(\mathcal{H}_A, G_A)$ and  a degenerate bimodule $(\mathcal{H}_B, G_B)\in \mathbb{D}(D, B)$. 
This fits with \eqref{diamond} where $w$ is the identity of $B$.

\begin{remark}
	One could argue that a typical element in $\mathbb{E}(D,\mathcal{C}_{\varphi})$ is defined by means of a homotopy between $\varphi_*(\mathcal{H}_A, G_A)$ and the zero module, but by gluing this homotopy with the canonical homotopy $(\mathcal{H}_B\otimes C_0[0,1), G_B\otimes 1)$ we establish a canonical equivalence between these definitions. We keep the degenerate bimodule at the end just because it allows more flexibility with Kasparov products as we will see. 
\end{remark}

Let us consider two *-homomorphisms $\varphi_i\colon A_i\to B_i$, with $i=0,1$, and  two bimodules $(\mathcal{E}_A,F_A)\in \mathbb{E}(A_0,A_1)$ and $(\mathcal{E}_B, F_B)\in \mathbb{E}(B_0, B_1)$ such that there exists a homotopy $(\overline{\mathcal{E}}, \overline{F})$ between $(\varphi_1)_*(\mathcal{E}_A,F_A)$ and $\varphi_0^*(\mathcal{E}_B, F_B)$ in $\mathbb{E}(A_0,B_1[0,1])$. Then this set of data produces a bimodule $(\mathcal{E}_\varphi,F_\varphi)\in \mathbb{E}(\mathcal{C}_{\varphi_0},\mathcal{C}_{\varphi_1})$, see \cite[Lemma 5.2]{An-Sk} for a detailed construction.

Let us see how we will intend in practice the Kasparov product of the  element $[(\mathcal{H}_{A_0}, G_{A_0}), (\overline{\mathcal{H}},\overline{G})]\in KK(D,\mathcal{C}_{\varphi_0})$ with $[\mathcal{E}_\varphi,F_\varphi]\in KK(\mathcal{C}_{\varphi_0},\mathcal{C}_{\varphi_1})$:
\begin{itemize}
\item let $(\mathcal{H}_{A_1},G_{A_1})\in \mathbb{E}(D,A_1)$ be a Kasparov product of $(\mathcal{H}_{A_0},G_{A_0})$ and $(\mathcal{E}_A,F_A)$;
\item let $(\overline{\mathcal{H}}',\overline{G}')\in \mathbb{E}(D, B_1[0,1])$ be a Kasparov product of $(\overline{\mathcal{H}},\overline{G})$ and $(\overline{\mathcal{E}}, \overline{F})$ such that the evaluation at $0$ of $(\overline{\mathcal{H}}',\overline{G}')$ is equal to $(\varphi_1)_*(\mathcal{H}_{A_1},G_{A_1})$;
\item observe that $(\mathcal{H}_{B_1},G_{B_1})$, the evaluation at $1$ of $(\overline{\mathcal{H}}',\overline{G}')$, is a Kasparov product of the degenerate bimodule $(\mathcal{H}_B, G_B)$ and $(\mathcal{E}_B, F_B)$. This is not necessarily degenerate, so the pair $\left((\mathcal{H}_{A_1},G_{A_1}),(\overline{\mathcal{H}}',\overline{G}')\right)$ only defines an element in $\mathbb{E}(D,\mathcal{Z}_{\varphi_1})$;
\item  but Lemma \ref{lemma-cs} below tells us that we have a canonical operatorial homotopy of $(\mathcal{H}_{B_1},G_{B_1})$ to the degenerate bimodule $(\mathcal{H}_{B_0}\otimes\mathcal{E}_B, G_{B_0}\otimes 1)$ by means of the operatorial homotopy $CS_t(G_{B_1},G_{B_0}\otimes1)$ defined below in \eqref{cst}. So the element in $\mathbb{E}(D,\mathcal{Z}_{\varphi_1})$, defined in the previous point, lifts canonically to the element 
\begin{equation}\label{k-product-mc}
\left((\mathcal{H}_{A_1},G_{A_1}),(\overline{\mathcal{H}}',\overline{G}')\diamond^{\mathrm{id}}_{\mathrm{ev}_1,\mathrm{ev}_0}(\mathcal{H}_{B_1},CS_t(G_{B_1},G_{B_0}\otimes1))\right)\in\mathbb{E}(D,\mathcal{C}_{\varphi_1})
\end{equation}
which will be our canonical representative for a Kasparov product of this kind.
\end{itemize}

\begin{lemma}{\cite[Lemma 11]{Sk-remarks}}\label{lemma-cs}
	Let $(\mathcal{E}, F)$ and $(\mathcal{E}, F')$ be two elements of $\mathbb{E}(A,B)$ such that $\forall a\in A$ we have that $a[F,F']a^*-\lambda\geq0$ modulo $\mathbb{K}(\mathcal{E})$, with $\lambda>-2$. Then $(\mathcal{E}, F)$ and $(\mathcal{E}, F')$ are operatorially homotopic. 
\end{lemma}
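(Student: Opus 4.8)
The plan is to realise the operatorial homotopy by the straight-line path between $F$ and $F'$, renormalised so that it remains a Kasparov operator at every time, with the bound $\lambda>-2$ being exactly what guarantees the path never degenerates. Throughout I would read $[F,F']$ as the graded commutator $FF'+F'F$ (the anticommutator, since $F,F'$ are odd): this is the only reading that makes $a[F,F']a^*$ self-adjoint and a bound by a real $\lambda$ meaningful. I would also read the hypothesis, after letting $a$ run through an approximate unit of $A$, as the estimate $FF'+F'F\geq\lambda$ modulo $\mathbb{K}(\mathcal{E})$ (in the localized $aa^*$ sense); making this passage precise is one of the technical points.

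First I would reduce to the case where $F$ and $F'$ are genuinely self-adjoint: replacing $F$ by $(F+F^*)/2$ changes it only by an operator that is compact after multiplication by elements of $A$, so the linear path $s\mapsto (1-s)F+s(F+F^*)/2$ is an operatorial homotopy and it leaves the anticommutator estimate unchanged modulo $\mathbb{K}(\mathcal{E})$; the same applies to $F'$. Assuming now $F=F^*$ and $F'=F'^*$, I set $G_t=(1-t)F+tF'$, a genuinely self-adjoint, uniformly norm-bounded, norm-continuous path with $G_0=F$, $G_1=F'$. The key computation, using $a(F^2-1),a(F'^2-1)\in\mathbb{K}(\mathcal{E})$ and $[a,F],[a,F']\in\mathbb{K}(\mathcal{E})$, is $a(G_t^2-1)a^*\equiv t(1-t)\,a(\{F,F'\}-2)a^*$ modulo $\mathbb{K}(\mathcal{E})$. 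Feeding in the estimate yields $aG_t^2a^*\geq \mu(t)\,aa^*$ modulo $\mathbb{K}(\mathcal{E})$, where $\mu(t)=1-(2-\lambda)t(1-t)$; the minimum of $\mu$ over $[0,1]$ is $(2+\lambda)/4$, which is strictly positive precisely because $\lambda>-2$. Hence $G_t$ is invertible modulo $A$-compacts uniformly in $t$, and fixing $\eta>0$ with $\eta^2<(2+\lambda)/4$, the spectral projection $\mathbf{1}_{(-\eta,\eta)}(G_t)$ is compact after multiplication by any $a\in A$.

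To conclude I would choose a continuous odd function $\chi\colon\RR\to[-1,1]$ with $\chi(x)=\mathrm{sign}(x)$ for $|x|\geq\eta$ and set $F_t:=\chi(G_t)$. Then $F_t=F_t^*$; $a(F_t^2-1)=a(\chi^2-1)(G_t)\in\mathbb{K}(\mathcal{E})$, because $(\chi^2-1)(G_t)$ is supported on the spectral window controlled by the above projection; $[a,F_t]\in\mathbb{K}(\mathcal{E})$ by the standard resolvent argument after writing $\chi-x(1+x^2)^{-1/2}\in C_0(\RR)$; and $t\mapsto F_t$ is norm-continuous since $t\mapsto G_t$ is and $\chi$ is continuous on the fixed bounded interval containing all the spectra. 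Finally, since $a(F^2-1)\in\mathbb{K}(\mathcal{E})$ the endpoint $F_0=\chi(F)$ is operatorially homotopic to $F$ via a short linear path, and likewise $F_1=\chi(F')$ to $F'$; concatenating the three homotopies gives the desired operatorial homotopy from $(\mathcal{E},F)$ to $(\mathcal{E},F')$.

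I expect the main obstacle to be making the invertibility of $G_t$ modulo $A$-compacts uniform in $t$ and converting the localized estimate $aG_t^2a^*\geq\mu(t)aa^*$ into genuine control of the spectral projection (hence its compactness against $A$); this is the step where $\lambda>-2$ is indispensable, and also where the discrepancy between a scalar bound $\lambda$ and the operator inequality $\lambda\,aa^*$ must be handled with care. Once this is secured, the functional-calculus verifications are routine.
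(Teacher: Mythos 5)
Your route is, at bottom, the same as the paper's (which is Skandalis's): both proofs use the bound $\lambda>-2$ to show that a path interpolating between $F$ and $F'$ has square bounded below by a strictly positive constant modulo an ideal of ``locally compact'' operators, and then renormalize that path into a path of genuine Kasparov operators. The differences are in execution rather than in the key idea. The paper takes the rotation path $\cos(t)F+\sin(t)F'$, whose square is $1+\sin(t)\cos(t)[F,F']$ modulo the ideal, and renormalizes by the explicit factor $(1+\sin(t)\cos(t)P)^{-\frac{1}{2}}$, well defined because $\sin(t)\cos(t)\le\frac{1}{2}$ and $P\ge\lambda>-2$; since this factor equals $1$ at $t=0,\pi/2$, the path ends exactly at $F$ and $F'$ and no endpoint corrections are needed. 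You take the linear path and renormalize by continuous functional calculus with $\chi$, at the cost of two extra endpoint homotopies; your bound $(2+\lambda)/4$ plays the role of the paper's $1+\frac{\lambda}{2}$.

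The one genuine gap is the step you yourself flag as the ``main obstacle,'' and it is exactly what the paper's introduction of the algebras $\mathcal{A}=\{T\in\mathbb{B}(\mathcal{E})\,:\,[T,a]\in\mathbb{K}(\mathcal{E})\ \forall a\in A\}$ and $\mathcal{J}=\{T\in\mathcal{A}\,:\,Ta\in\mathbb{K}(\mathcal{E})\ \forall a\in A\}$ is designed to close. One first shows that the localized hypothesis ($a[F,F']a^*\ge\lambda aa^*$ modulo $\mathbb{K}(\mathcal{E})$ for all $a$) forces $[F,F']\ge\lambda$ modulo $\mathcal{J}$ --- the negative part of $[F,F']-\lambda$ lies in $\mathcal{J}$, using that closed ideals are hereditary --- and then lifts positivity from the quotient $\mathcal{A}/\mathcal{J}$: this is precisely the paper's line ``write $[F,F']=P+K$ with $P\ge\lambda$, $K\in\mathcal{J}$.'' Once this is in place your argument closes, but you must also discard the spectral projections: $\mathbb{B}(\mathcal{E})$ is not a von Neumann algebra, there is no Borel functional calculus on a Hilbert module, and the honest spectrum of $G_t$ need not have a gap at all --- only the image $\dot{G}_t$ in $\mathcal{A}/\mathcal{J}$ does. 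The correct statement is that $\dot{G}_t^2\ge\mu(t)>\eta^2$ gives $\mathrm{sp}(\dot{G}_t)\cap(-\eta,\eta)=\emptyset$, hence $(\chi^2-1)(\dot{G}_t)=0$, i.e.\ $\chi(G_t)^2-1\in\mathcal{J}$; this, not any projection, is the compactness you need, and the same quotient argument shows $\chi(F)-F\in\mathcal{J}$ and $\chi(F')-F'\in\mathcal{J}$, which is what legitimizes your endpoint homotopies.
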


\begin{proof}
	Let us define the algebra
	$$\mathcal{A}:=\{T\in \mathbb{B}(\mathcal{E})\,|\, [T,a]\in \mathbb{K}({\mathcal{E}})\, ,\forall a\in A \}$$
	and its ideal
	$$\mathcal{J}:=\{T\in \mathcal{A}\,|\, Ta\in \mathbb{K}({\mathcal{E}}) \, ,\forall a\in A\}.$$
	Then $[F,F']\in\mathcal{A}$ and it is positive modulo $\mathcal{J}$. Write $[F,F']=P+K$, where $P\in \mathcal{A}$, $P\geq\lambda$ and $K\in \mathcal{J}$. Note that as $F^2-1$ and $F'-1$ belong to $\mathcal{J}$ then $[F,P]$ and $[F',P]$ belong to $\mathcal{J}$. 
	Put 
	\begin{equation}\label{cst}
   CS_t(F,F'):= (1+\sin(t)\cdot \cos(t) P)^{-\frac{1}{2}}(\cos(t)F+\sin(t)F')\quad t\in[0,\pi/2].
	\end{equation}
Therefore, it needs a simple calculation to check that $(\mathcal{E}, CS_t(F,F'))\in \mathbb{E}(A,B[0,1])$ and then it realizes the desired operatorial homotopy.
	\end{proof}
The well-known proof of the previous lemma is useful in order to prove the following one, which will be key for our main result. 

\begin{lemma}\label{lemma-cs-degenerate}
		Let $F$ and $F'$ be regular operators on $\mathcal{E}$ such that $F$ and $F'$ are invertible and  that $[F,F']-\lambda\geq0$ in 
		$\mathbb{B}(\mathcal{E})$, with $\lambda>-2$. Then $F$ and $F'$ are homotopic through a path of invertible operators. 
\end{lemma}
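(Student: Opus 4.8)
The plan is to reuse, almost verbatim, the path $CS_t$ constructed in the proof of Lemma \ref{lemma-cs}, exploiting the fact that here every estimate is exact rather than modulo $\mathbb{K}(\mathcal{E})$. Since the hypothesis is the honest operator inequality $[F,F']-\lambda\geq 0$ in $\mathbb{B}(\mathcal{E})$, in the decomposition $[F,F']=P+K$ of the previous proof I may simply take $K=0$ and $P:=[F,F']$, so that the formula \eqref{cst} becomes
\[
CS_t(F,F')=(1+\sin(t)\cos(t)\,P)^{-\frac12}(\cos(t)F+\sin(t)F'),\qquad t\in[0,\pi/2].
\]
Its endpoints are $CS_0=F$ and $CS_{\pi/2}=F'$, and $t\mapsto CS_t$ is (norm-)continuous, so the whole problem reduces to checking that each $CS_t$ is invertible.

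First I would dispose of the normalising factor. Since $P\geq\lambda$ and $\sin(t)\cos(t)=\tfrac12\sin(2t)\in[0,\tfrac12]$ for $t\in[0,\pi/2]$, one has $1+\sin(t)\cos(t)\,P\geq 1+\tfrac12\min(\lambda,0)>0$, where the strict positivity is exactly the hypothesis $\lambda>-2$. Hence $1+\sin(t)\cos(t)\,P$ is a positive invertible element of $\mathbb{B}(\mathcal{E})$ and its inverse square root is again bounded and invertible. As multiplication by an invertible operator preserves invertibility, $CS_t$ is invertible if and only if $G_t:=\cos(t)F+\sin(t)F'$ is.

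The heart of the argument is therefore the invertibility of $G_t$ for every $t$, and I expect this to be the main obstacle. The relevant computation is $G_t^2=\cos^2(t)F^2+\sin^2(t)F'^2+\sin(t)\cos(t)\,P$; using that the operators attached to degenerate bimodules are self-adjoint symmetries, i.e. $F^2=F'^2=1$, this collapses to $G_t^2=1+\sin(t)\cos(t)\,P\geq 1+\tfrac12\min(\lambda,0)>0$, so that $G_t$ is invertible and in fact $CS_t=(G_t^2)^{-\frac12}G_t$ is a self-adjoint unitary. The subtle point is precisely that mere invertibility of $F$ and $F'$ does not keep $\cos(t)F+\sin(t)F'$ invertible at intermediate $t$ — a sign mismatch can produce a kernel — so one genuinely needs both the commutator lower bound and the normalisation $F^2=F'^2=1$ (more generally $F^2,F'^2\geq 1$), and the threshold $\lambda>-2$ is sharp. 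Finally, in the regular but possibly unbounded case I would first pass to the bounded transforms, for which the same estimates and the boundedness of $P$ persist, and then transport the resulting homotopy of invertibles back.
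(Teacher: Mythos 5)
Your proof is correct and is essentially the paper's own argument: the paper likewise sets $P=[F,F']$, computes $(\cos(t)F+\sin(t)F')^2 = 1+\sin(t)\cos(t)\,P \geq 1+\tfrac{\lambda}{2} > 0$, and takes $\mathrm{sgn}(CS_t(F,F'))$ as the homotopy through invertibles. Your observation that this computation secretly uses $F^2=F'^2=1$ (or at least $F^2,F'^2\geq 1$) --- a normalisation absent from the statement, implicitly invoked in the paper's proof, and genuinely necessary (a scaling example such as $F=c$, $F'=-c$ with $0<c<1$ kills $\cos(t)F+\sin(t)F'$ at $t=\pi/4$ while satisfying the stated hypotheses) --- is accurate, so on this point your write-up is more careful than the paper's.
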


\begin{proof}
Write in this case $[F,F']=P$. Then a simple calculation shows that when $\lambda\in [0,-2)$ we have that
$$(\cos(t)F+\sin(t)F')^2= 1+\sin(t)\cdot \cos(t)P\geq1+\frac{\lambda}{2}>0,$$
while when $\lambda$ is positive, we directly obtain that $(\cos(t)F+\sin(t)F')^2>0$. 
This implies that   $(\cos(t)F+\sin(t)F')$ has a gap near zero in the spectrum and then the same holds for $CS_t(F,F')$.
Thus $(\mathcal{E}, \mathrm{sgn}(CS_t(F,F')))$ realizes the desired homotopy through invertible operators.  
\end{proof}

\subsection{Bott bimodules}

In this subsection we recall the constructions of Bott KK-elements given in \cite[Section 5]{Kasparov81}.
Let $Cliff(\RR^n)$ be the Clifford algebra associated with the euclidean metric on $\RR^n$. 
Consider the $\RR,C^{\RR}_0(\RR^n)\otimes Cliff(\RR^n)$-bimodule  $C^{\RR}_0(\RR^n)\otimes Cliff(\RR^n)$.  The bounded multiplier $F:=x\mapsto x\cdot(1+\|x\|^2)^{-1}$ is $GL_n(\RR)$-invariant and we have that $F^*=F$ and that $1-F^2=(1+\|x\|^2)^{-\frac{1}{2}}\in C^{\RR}_0(\RR^n)\otimes Cliff(\RR^n)$. 

\begin{definition}\label{bott1}
Let us denote by $\mathcal{B}\in KKO_n^{GL_n(\RR)}(\RR, C^{\RR}_0(\RR^n)\otimes Cliff(\RR^n))$ the element given by 
\begin{equation}[C^{\RR}_0(\RR^n)\otimes Cliff(\RR^n), F].
\end{equation}
\end{definition}

Let $\mathfrak{j}\colon \mathcal{H}\to GL_n(\RR)$ be an homomorphism of groups and let $S$ be an  $\mathcal{H}$-equivariant finite dimensional representation of $Cliff(\RR^n)$. This naturally gives an element of $KK^\mathcal{H}(Cliff(\RR^n), \CC)$ in the following way.

\begin{definition}\label{bott2}
	Let us denote by $\beta^{\mathfrak{j}}_n(S)\in KKO^\mathcal{H}_n(\RR, C^{\RR}_0(\RR^n))$ the element given by 
	\begin{equation}
\mathfrak{j}^*(\mathcal{B})\otimes_{Cliff(\RR^n)} [S].
    \end{equation}
\end{definition}

When $\mathfrak{j}$ is the natural inclusion of a subgroup $\mathcal{H}$ or, for instance, the natural map from $Spin$ or from the metalinear group  $Ml$  to $GL$, we will  denote the Bott element associated with $S$ by $\beta_n^{\mathcal{H}}(S)$.

\begin{remark}
Notice  that, after complexifying, one can construct analogous elements in $KK^\mathcal{H}_n(\CC, C^{\RR}_0(\RR^n))$ associated with a complex representation of the complexified Clifford algebra $\CC liff(\RR^n):= Cliff(\RR^n)\otimes \CC$. 
\end{remark}

\begin{example}
The most frequent geometrical examples of  Bott  elements are the following ones: 
\begin{enumerate}
	\item \textbf{The oriented case:} consider  the representation of $\CC liff(\RR^n)$ given by the complexified exterior algebra $\Lambda^*_{\CC}\RR^n$ of $\RR^n$. Then we have the class
%	and $\alpha^{SO_n}\in KK_n^{SO_n}(C_0(\RR^n),\CC)$
%	respectively by
	$$\beta^{SO_n}_n(\Lambda_\CC\RR^n):=\left[C_0(\RR^n, \Lambda^*_{\CC}\RR^n), x\mapsto \frac{\lambda_x+\lambda_x^*}{\sqrt{1+\|x\|^2}}\right]\in KK_n^{SO_n}(\CC,C_0(\RR^n)),$$
%	
%	$$\,\mbox{and}\,\left[L^2(\RR^n, \Lambda^*_{\CC}\RR^n), \frac{d+d^*}{\sqrt{1+\Delta}}\right] $$
	where  $\lambda_x$ is the exterior multiplication by $x\in \RR^n$. 
%	
%	$d$ is the exterior differentiation and $\Delta$ is the Laplacian operator on forms. Moreover the actions of $SO_n$ are induced by its action on $\Lambda^*_{\CC}\RR^n$.
	
	\item \textbf{The $KO$-oriented case:} 
	consider the real spinor representation $\slashed{S}^n$ of $Cliff(\RR^n)$, then we obtain the class $$\slashed{\beta}_n:= \left[C^{\RR}_0(\RR^n, \slashed{S}^n), x\mapsto \frac{cl(x)}{\sqrt{1+\|x\|^2}}\right]\in KKO_n^{Spin_n}(\RR,C^{\RR}_0(\RR^n)).$$ 
	
%	and $\slashed{\alpha}^n\in KKO_n^{Spin_n}(C^{\RR}_0(\RR^n),\CC)$
%	respectively by
%	$$\left[C^{\RR}_0(\RR^n, \slashed{S}^n), x\mapsto \frac{cl(x)}{\sqrt{1+\|x\|^2}}\right]\,\mbox{and}\,\left[L_{\RR}^2(\RR^n,\slashed{S}^n), \frac{\slashed{D}_{\RR^n}}{\sqrt{1+\Delta}}\right] $$
%	where $\slashed{S}^n$ is the real spinor representation, $cl(x)$ is the Clifford multiplication by $x\in \RR^n$, $\slashed{D}_{\RR^n}$ is the Dirac operator on $\RR^n$ and $\Delta=\slashed{D}_{\RR^n}^2$. Moreover the actions of $Spin_n$ are induced by its action on $\slashed{S}^n$.
	
	\item \textbf{The $K$-oriented case:}  $\slashed{\beta}_n^c\in KK_n^{Spin^c_n}(\CC,C_0(\RR^n))$
%	and $\slashed{\alpha}^n_c\in KK^{Spin^c_n}(C_0(\RR^n),\CC)$
 is defined similarly to the $KO$-oriented case. 
\end{enumerate}
\end{example}

\subsection{KK-elements associated with groupoid cocycles}

In this section we are going to quickly recall some constructions from \cite[Section II]{HSk}, concerning Thom elements in KK-theory. 

Let $\mathcal{H}$ be a Lie group and let $i\colon G\to \mathcal{H} $ be a cocycle on  the Lie groupoid $G\rightrightarrows M$.
More precisely it is given by the following data:
\begin{itemize}
	\item an open cover $\{U_j\}_{j\in J}$ of $M$;
	\item for all $j,k\in J$ an application $i_{j,k}\colon G^j_k\to \mathcal{H}$ such that 
	$$i_{j,h}(\gamma_1\gamma_2)=i_{j,k}(\gamma_1)i_{k,h}(\gamma_2)$$
	for all $\gamma_1\in G^j_k$ and $\gamma_2\in G^k_h$.  Here $ G^j_k$ denotes $G^{U_j}_{U_k}$.
\end{itemize}

It is equivalent to consider the principal $G$-equivariant fiber bundle $P_i$ with structural group $\mathcal{H}$, 
constructed via the cocycle $i_0$, which is the restriction of $i$ to $M$ (recall that in this situation the actions of $\mathcal{H}$ and $G$ on $P_i$ commute).

Now, if $A$ is an $\mathcal{H}$-algebra, the associated bundle $P_i\times_{\mathcal{H}} A$ is $G$-equivariant and we can construct the 
crossed product C*-algebra $A\rtimes_i G:=C^*(G; P_i\times_{\mathcal{H}} A)$. 
Notice that, if we denote by $G^{P_i}$ the transformation groupoid $P_i\rtimes G\rightrightarrows P_i$  associated with $i$, then $A\rtimes_i G$ is 
 Morita equivalent to $(A\otimes C^*(G^{P_i}) )\rtimes \mathcal{H}$, let us denote by $M_{A,i}$ the bimodule associated with this Morita equivalence.

\begin{definition}
	Let $i\colon G\to \mathcal{H}$ be a cocycle as before. Let $A$ and $B$ be $\mathcal{H}$-algebras, then set the morphism of KK-groups
	$$ i^*\colon KK^{\mathcal{H}}(A,B)\to KK(A\rtimes_i G,B\rtimes_i G)$$
by 
$$x\mapsto M_{A,i}\otimes j^{\mathcal{H}}(\mathrm{Id}_{C^*(G^{P_i})}\otimes_\CC x)\otimes M^{-1}_{B,i}$$
\end{definition}
Intuitively, $i^*$ assembles an equivariant bundle over $G$ whose typical fiber is given by the Kasparov bimodule $x$.

%\begin{remark}
%	Notice that the same construction could be performed when $\mathcal{H}$ is a Lie groupoid, but the applications we are interested in here involve only Lie groups. 
%\end{remark}

	Let $E\xrightarrow{\pi}G^{(0)}$ be a $G$-equivariant vector bundle, namely it is associated with the restriction to $G^{(0)}$ of a cocycle $i\colon G\to \mathcal{H}$, where $\mathcal{H}$ is the structural Lie group of $E$. 
	Then we can consider the action groupoid $G^E:=E\rtimes G\rightrightarrows E$. As a set, $G^E$ is equal to the pull-back bundle $r^*E$ over $G$.
	We have the following isomorphisms of C*-algebras
	$$C^*(G^E)\cong C_0(E)\rtimes G\cong C_0(P_i\times_{\mathcal{H}} \RR^n)\rtimes G\cong C^*(G; P_i\times_{\mathcal{H}} C_0(\RR^n))=: C_0(\RR^n)\rtimes_i G.$$ 
%Observe that if $E$ is the total space of the normal groupoid $\mathcal{N}_\iota$  associated with the immersion $\iota\colon L\hookrightarrow G$ in Definition \ref{def-dnc}, then the C*-algebras $C^*(L^{N_\iota})$ and $C^*(\mathcal{N}_\iota)$
% are isomorphic by means of the fiber-wise Fourier transform. \blue{in the core/vertical direction+ dependence of this iso on the horizontal lift}. 
 
\begin{definition}\label{bott3}
	Let $E\to G$ be a vector bundle  of rank $n$ associated with a cocycle $i\colon G\to \mathcal{H}$. Let $\mathcal{S}$ be a representation of $Cliff(\RR^n)$ which assembles to a  $Cliff(E)$-module  $S$ over $G$.
	Then define the class $\beta(E,S)$ as the KK-element $i^*(\beta^\mathcal{H}_n(\mathcal{S}))$ in the KK-groups $KK_n(C^*(G), C^*(G^E))$, in the complex case,
	or in $KKO_n(C^*_{\RR}(G), C^*_{\RR}(G^E))$, in the real case.
\end{definition}

\begin{remark}\label{HS-normal}
The usual case where we want to apply this construction is the one of $\mathcal{N}_\iota$, the normal bundle of an immersion $\iota\colon H\to G$ of Lie groupoids. It is a $\mathcal{VB}$-groupoid with objects $N_{\iota_0}$ and its C*-algebra $C^*(\mathcal{N}_\iota)$ is not always identifiable in the form $C^*(H^E)$, for some $H$-bundle $E$ over $H^{(0)}$. 

Examples where this happens are the following ones:
\begin{enumerate}
	\item  if $N_{\iota_0}$  is trivial, then $C^*(\mathcal{N}_\iota)$ is isomorphic to $C^*(H^C)$ via fiber-wise Fourier transform, where $C$ is the core of $\mathcal{N}_\iota$ and, by Remark \ref{rmk-core}, it is an $H$-bundle.
	\item More generally, assume that $N_{\iota_0}$ is an $H$-bundle such that $s^*N_{\iota_0}= r^*N_{\iota_0}$. Then, by \eqref{rh-iso},  we have that
	$C^*(\mathcal{N}_\iota)\cong C^*(H^{N_{\iota_0}\oplus C})$. 
	Observe that this isomorphism uses the fiber-wise Fourier transform along $C$, but thanks to Remark \ref{affine}, the isomorphism induced in KK-theory does not depend on the splitting used in \eqref{rh-iso}.
\end{enumerate}
\noindent\emph{In this case we will shortly denote  $\beta(\mathcal{N}_\iota,S)$ just by $\beta(\iota,S)$ and suppress $S$ from the notation when it is obvious from the context.}

\end{remark}

\subsection{The asymptotic morphism associated with a DNC}\label{DNC-boundary}

Let us fix an immersion of Lie groupoids $\iota\colon G\to H$ and consider the associated deformation to the normal cone 
$DNC(G,H)\rightrightarrows DNC(G^{(0)},H^{(0)})$. Its C*-algebra is a $C[0,1]$-algebra and it comes with the following extension of $C[0,1]$-algebras
\begin{equation}\label{DNC-asymp}
\xymatrix{0\ar[r]& C^*(G)\otimes C_0(0,1]\ar[r]& C^*(DNC(G,H))\ar[r]^(.6){\mathrm{ev}_0}& C^*(\mathcal{N}_H^G)\ar[r]&0}
\end{equation}
where $C[0,1]$ acts on $C^*(\mathcal{N}_H^G)$ by evaluation at 0.
Notice that $C^*(G)\otimes C_0(0,1]$ is contractible and $\mathrm{ev}_0$ induces an invertible asymptotic morphism. 
Hence, as it is explained in \cite[Lemma A.2]{kasp-sk}, we can give the following definition. 

\begin{definition}\label{E-dnc} The exact sequence \eqref{DNC-asymp} gives rise to an element in the E-theory group $E(C^*(\mathcal{N}_H^G), C^*(G))$ which is explicitly obtained as 
	\begin{equation}\label{partial}
	\partial_H^G:=1_{C^*(\mathcal{N}_H^G)}\otimes[\mathrm{ev}_{0}]^{-1}\otimes [\mathrm{ev}_{1}].
	\end{equation}
	\noindent \emph{In some case, where the notation would be too heavy, we will denote it by $\partial(\iota)$.}
\end{definition}
If, in addition, $\mathrm{ev}_{0}$ has a completely positive lifting of norm 1, then  $\mathrm{ev}_{0}$ induces a KK-equivalence $KK(C^*(\mathcal{N}_H^G),C^*(DNC(G,H)))\xrightarrow{[\mathrm{ev}_0]}KK(C^*(\mathcal{N}_H^G),C^*(\mathcal{N}_H^G))$ and $\partial_H^G$ is an element of $KK(C^*(\mathcal{N}_H^G), C^*(G))$. 

\begin{remark}
	Let us point out the obvious fact that one can see $\partial_H^{G}$ as the boundary map associated with the short exact sequence
	\begin{equation}\label{DNC-es}
	\xymatrix{0\ar[r]& C^*(G)\otimes C_0(0,1)\ar[r]& C^*(DNC^{[0,1)}(G,H))\ar[r]^(.6){\mathrm{ev}_0}& C^*(\mathcal{N}_H^G)\ar[r]&0}.
	\end{equation}
%	Moreover notice that $[\mathrm{ev}_0]\otimes1_{C^*(\mathcal{N}_H^G)}\otimes[\mathrm{ev}_{0}]^{-1}$ is equal to $1_{C^*(DNC(G,H))}$. 

	Observe that $C^*(DNC^{[0,1)}(G,H))$ is isomorphic to the mapping cone C*-algebra of $\mathrm{ev}_1\colon C^*(DNC(G,H))\to C^*(G)$, then \eqref{DNC-es} is equivalent in K-theory to the exact sequence
	\begin{equation}
	\xymatrix{0\ar[r]& C^*(G)\otimes C_0(0,1)\ar[r]& C^*(DNC^{[0,1)}(G,H))\ar[r]& C^*(DNC(G,H))\ar[r]&0}
	\end{equation}
	with associated boundary map given by the composition of  $[\mathrm{ev}_1]$ and the suspension map.
	
	Thus, an element in $\mathbb{E}(\CC,C^*(DNC^{[0,1)}(G,H)))$ is given by a $\CC,C^*(DNC^{[0,1]}(G,H))$-bimodule $(\mathcal{E},F)$ and a homotopy in $\mathbb{E}(\CC,C^*(G))$ of $(\mathrm{ev}_1)_*(\mathcal{E},F)$ to a degenerate bimodule. 
\end{remark}

\section{Longitudinal classes}\label{longitudinal-classes}
In this section we shall recall the construction of secondary invariants given in \cite{zenobi-ad}.
The geometrical setting is the following one where, to keep the presentation simple, we are going to present only the complex case, but everything works analogously in the real case with KO-theory.
\begin{itemize}
	\item Let $G\rightrightarrows M$ be a Lie groupoid such that its Lie algebroid $\mathfrak{A}G$ is of rank $n$ and let us assume that the anchor map is injective (i.e. $\mathfrak{A}G$ is an involutive sub-bundle of the tangent bundle $TM$).
	
	\item Let $g$ be a metric on $\mathfrak{A}G$, by means of it we can define a $G$-invariant metric on $\ker{ds}$ along the $s$-fibers of $G$.
	\item Let $\mathrm{Cliff}_g\left(\mathfrak{A}G\right)$ be the Clifford algebra bundle over $M$ associated with the metric $g$. Let us fix $E\to M$, an hermitian bundle of  $\mathrm{Cliff}_g(\mathfrak{A}G)$-modules and let  $cl(X)$ denote the Clifford multiplication by $X\in\mathrm{Cliff}_g\left(\mathfrak{A}G\right)$.
	\item We will denote by $\mathcal{E}(G)$ the $C^*(G)$-module obtained as the completion  of the module $C^\infty_c(G, r^*E\otimes \Omega^{\frac{1}{2}}(G))$ with respect to the usual $C^*(G)$-valued inner product.
	\item Let $\nabla$ denote the fiberwise Levi-Civita  connection  associated with the metric $g$.
\end{itemize}
Assume that $E$ is equipped with a metric $h$ and a compatible connection $\nabla^E$ such that:
\begin{itemize}
	\item the Clifford multiplication is skew-symmetric, that is
	\[
	\langle cl(X)s_1,s_2\rangle+\langle s_1,cl(X)s_2\rangle=0
	\]
	for all $X\in C^\infty\left(M,\mathfrak{A}G\right)$ and $s_1,s_2\in C^\infty(M,E)$;
	\item $\nabla^E$ is compatible with the Levi-Civita connection $\nabla$, namely
	\[
	\nabla^E_X(cl(Y)s)=cl(\nabla_XY)s+cl(Y)\nabla^E_X(s)
	\]
	for all $X,Y\in C^\infty\left(M,\mathfrak{A}G\right)$ and $s\in  C^\infty(M,E)$.
\end{itemize}
\begin{definition}\label{diracgr}
	The generalized Dirac operator associated with this set of data is defined as
	\[
	D_G^E\colon s\mapsto \sum_{\alpha}c(e_\alpha)\nabla^E_\alpha(s)
	\]
	for $s\in C_c^\infty(G,r^*E_G\otimes \Omega^{\frac{1}{2}}(G))$ and $\{e_\alpha\}_{\alpha\in A}$ a local orthonormal frame. Here, by a little abuse of notation, we still denote by $\nabla^E$ the pull-back of the connection to $r^*E$.
	
	Let $p\colon \mathfrak{A}^*G\to M$ be the bundle projection, then the symbol of $D_G^E$ is given by the 
	section $\sigma_E\in C^\infty( \mathfrak{A}^*G, \mathrm{End}(p^*E))$, defined by $\sigma_E\colon \xi\mapsto c(\xi)$.
	
\end{definition}

\begin{examples}\label{main-ex}
	The typical geometrical examples for $E$ are the following ones:
	\begin{enumerate}
		\item if $\mathfrak{A}G$ is orientable, then for $E$ equal to $\Lambda^*(\mathfrak{A}G)$, the exterior algebra of the Lie algebroid,  $D_G^{\Lambda^*}$ is equal to $D^{sign}_G$, the longitudinal Signature operator on $G$;
		\item if $\mathfrak{A}G$ is $Spin$ or $Spin^c$, then for $E$ equal to $\slashed{S}_G$, the spinor bundle associated with the $Spin$ or $Spin^c$ structure of the Lie algebroid,  $D_G^{\slashed{S}}$ is equal to $\slashed{D}_G$, the longitudinal $Spin$ or $Spin^c$ Dirac operator on $G$.
	\end{enumerate} 
\end{examples}
Now, let us see how we can define K-theory classes by means of these operators. 
First recall that $D_G^E$ is a regular unbounded self-adjoint operator on $\mathcal{E}(G)$, see \cite{vas}.
Let $\psi$ be a continuous function on the spectrum of $D_G^E$ (which is a subset of $\RR$). We say that it is a \emph{normalizing function} if it is odd (i.e. $\psi(-s)=-\psi(s)\,\,\forall s\in\RR$) and $\lim_{s\to\pm\infty}\psi(s)=\pm1$.
It is a standard fact that the continuous functional calculus of $D_G^E$ by means of $\psi$ gives a continuous operator $\psi(D_G^E)\in \BB(\mathcal{E}(G))$, in particular it is an elliptic 0-order pseudodifferential operator.

\begin{definition}\label{def-ind-class}
Let us denote by $[D_G^E]$ the class in $KK_n(\CC,C^*(G))$ induced by the Kasparov bimodule
$(\mathcal{E}(G),\psi(D_G^E) )$.
\end{definition} 
\begin{remark}\label{psi-path}
	Let $\psi_1$ and $\psi_2$ two normalizing functions for $D_G^E$. Then, for $t\in[0,1]$, $\psi_t:=t\cdot\psi_1+(1-t)\cdot\psi_2$  is a path of normalizing functions. This implies that the class $[D_G^E]$ does not depend on the choice of $\psi$.
\end{remark}

Let $X$ be a closed $G$-invariant smooth submanifold of $M$. Then, as in Remark \ref{fullvsred}, we have a restriction element $[\mathrm{ev}_X]\in KK(C^*(G), C^*(G_{|X}))$. It is immediate to see that 
\begin{equation}\label{restriction-dirac}
[D_G^E]\otimes_{C^*(G)}[\mathrm{ev}_X]=[D_{G_{|X}}^{E_{|X}}]\in KK_n(\CC,C^*(G_{|X})). 
\end{equation}
\emph{From now on, let $E$ be implicitly understood}

\medskip

\noindent\textbf{Fundamental classes.}
	If we see $\mathfrak{A}G$ as a Lie groupoid over $M$, then the corresponding operator $D_{\mathfrak{A}G}$, constructed by using the recipe of Definition \ref{diracgr}, is nothing but the fiber-wise Fourier transform of the Clifford multiplication $cl_G$. It then defines a class 
	\begin{equation}
	\left[\widehat{cl}_G\right]:=\left[\mathcal{E}(\mathfrak{A}G),\psi(\widehat{cl}_G)\right]\in KK_n(\CC, C^*(\mathfrak{A}G)).
	\end{equation}
Let $G$ be a Lie groupoid over a closed smooth manifold $M$, such that its Lie algebroid $\mathfrak{A}G$ is an orientable ($Spin$ or $Spin^c$) $M$-vector bundle, where $M\rightrightarrows M$ denotes the trivial groupoid.
 Recall that $u\colon M \to G$ denotes the unit map and that $\mathcal{N}_u$ is isomorphic to $\mathfrak{A}G$. 
	Then it is easy to check the following equality
	 \begin{equation}\label{symbol-beta}
	[pt]\otimes_{C(M)}\beta(u)=\left[\widehat{cl}_G\right]\in KK_n(\CC, C^*(\mathfrak{A}G))
	\end{equation}
	where $[pt]\in KK(\CC,C(M))$ is the class induced by the collapsing map $pt\colon M\to *$, which is proper if $M$ is compact. 	
	Recall that $\mathfrak{A}G$ is the restriction of $G_{ad}\rightrightarrows M\times[0,1]$ to $M\times\{0\}$ and, as explained in Subsection \ref{DNC-boundary}, this restriction induces a KK-equivalence. So, thanks to \eqref{restriction-dirac}, it is clear that 
	\begin{equation}\label{ad-class}
	[D_{G_{ad}}]=\left[\widehat{cl}_G\right]\otimes_{C^*(\mathfrak{A}G)} [\mathrm{ev}_0]^{-1}\in KK_n(\CC,C^*(G_{ad})).
	\end{equation}	
	 
	\medskip
	
	\noindent\textbf{Primary invariants.}  Again by using
	\eqref{restriction-dirac}, we obtain that
	\begin{equation}\label{ind-class}
	[D_G]=[D_{G_{ad}}]\otimes_{C^*(G_{ad})}[\mathrm{ev}_1]\in KK_n(\CC,C^*(G))
	\end{equation}	
	which is primary invariant or the \emph{index class} of $D_G$.

\medskip
	
	\noindent\textbf{Secondary invariants.} Let us assume that there exists a bounded operator $A$ on $\mathcal{E}(G)$  such that $D_G+A$ is invertible. This implies that $\mathrm{sgn}$, the sign function, is a continuous normalizing function on the spectrum of $D_G+A$. 
	It follows that $(\mathcal{E}(G),\mathrm{sgn}(D_G+A))$ is a degenerate Kasparov bimodule.
	
	\begin{remark}
	In this case it obviously follows that the index class \eqref{ind-class} associated with $D$ is the trivial element in $KK_n(\CC,C^*(G))$.
	\end{remark}
	 Now, we need the following ingredients: let  $\psi$ be any continuous normalizing function for $D_{G_{ad}}$; let $\psi_t=t\cdot\mathrm{sgn}+(1-t)\cdot\psi$; finally, let $\lambda\colon [0,1]\to [0,1]$ be the function which is given by $s\mapsto 2s$ for $s\in [0,1/2]$ and $s\mapsto1$ for $s\in [1/2,1]$.

	 Consider the following two Kasparov bimodule
	 \begin{itemize}
	 	\item $\left(\mathcal{E}(G_{ad}),\psi(D_{G_{ad}})\right)\in \mathbb{E}_n(\CC, C^*(G_{ad}))$, 
	 	\item $\left(\mathcal{E}(G)\otimes C_0[0,1],\psi_t(D_G+\lambda(t)A)\right)\in\mathbb{E}(\CC, C^*(G\times[0,1]))$,
	 \end{itemize}
	 and notice that the evaluation at 1 of  the first one is equal to the evaluation at 0 of the second one. 
	 Let us define, as in \eqref{mc-class}, the following element in in $\mathbb{E}(\CC, C^*(G_{ad}^{[0,1)}))$ by the pair  of Kasparov bimodules \begin{equation}\label{rho-bimodule}\left(\left(\mathcal{E}(G_{ad}),\psi(D_{G_{ad}})\right),\left(\mathcal{E}(G)\otimes C_0[0,1],\psi_t(D_G+\lambda(t)A)\right)\right).\end{equation}
	
	  \begin{definition}\label{rho-def}\cite{zenobi-ad}
	 We will call the class of \eqref{rho-bimodule} the $\varrho$-class of the invertible perturbation of $D_G$ associated with  $A$ by $$\varrho(D_G,A)\in KK_n(\CC, C^*(G_{ad}^{[0,1)})).$$  
	 	If $D_G$ is already invertible, namely $A=0$, we will denote the associated $\varrho$-class just by $\varrho(D_G)$.
	 	
	 \end{definition}

\section{Transverse classes}\label{s4}

\subsection{The isometric case}
Let $\iota\colon H\looparrowright G$ be an immersion of Lie groupoids.
 \emph{Assume that the normal groupoid $\mathcal{N}_\iota$ is the total space of a vector bundle over $H$ of rank $n$ and it is associated with a cocycle $i\colon H\to \mathcal{H}$, where $\mathcal{H}$ is the structural Lie group of $\mathcal{N}_\iota$. Moreover suppose that  $\mathcal{N}_\iota$ is the pull-back through $r$ of an $H$-vector bundle  over $H^{(0)}$ (as for instance in Remark \ref{HS-normal}).}
Let $S\to G$ be a $\mathrm{Cliff}(\mathcal{N}_\iota)$-module.  
 \begin{definition}
 	The lower shriek class associated with $\iota$ is given by the element
 	\begin{equation}
 	 \iota_!(S):= \beta(\iota,S)\otimes\partial(\iota)
 	\end{equation}
 	which belongs to $E_n(C^*(H), C^*(G))$. Here $\beta(\iota,S)$ is as in Remark \ref{HS-normal} and $\partial(\iota)$ as in Definition \ref{E-dnc}.
 \end{definition}

The immersion $\iota$, as in Remark \ref{imm-sub}, induces the following maps:
the immersion of adiabatic groupoids $\iota_{ad}\colon H_{ad}\looparrowright G_{ad}$; by restriction to $[0,1)$, it also induces the immersion $\iota_{ad}^{[0,1)}\colon H^{[0,1)}_{ad}\looparrowright G_{ad}^{[0,1)}$;
finally, by restriction to $\{0\}$, the immersion $d\iota\colon \mathfrak{A}H\looparrowright\mathfrak{A}G$.
These immersions define in turn the lower shriek classes $$(\iota_{ad})_!\,,\quad (\iota_{ad}^{[0,1)})_!\quad \mbox{and} \quad d\iota_!$$ as elements of suitable  E-theory groups. 

\begin{remark}\label{KK-mc}
Notice that, by following the recipe of \cite[Lemma 5.2]{An-Sk} in the KK-theory of mapping cone C*-algebras, is obtained by gluing $(\mathcal{E}_{ad}, F_{ad})$, which represents
$(\iota_{ad})_!$, and $(\mathcal{E}\otimes C_0([0,1)), F\otimes 1)$, where $(\mathcal{E}, F)$ represents $\iota_!$. Here it happens in E-theory.
\end{remark}

It is then easy to show that the following diagram
\begin{equation}\label{diagram!}
\xymatrix{\cdots\ar[r]&K_*(C^*(H)\otimes C_0(0,1))\ar[r]\ar[d]^{\iota_!\otimes id_{C_0(0,1)}}&K_{*}(C^*(H_{ad}^{[0,1)}))\ar[r]\ar[d]^{(\iota_{ad}^{[0,1)})_!}&K_*(C^*(\mathfrak{A}H))\ar[r]\ar[d]^{d\iota_!}&\cdots\\
\cdots\ar[r]&K_{*+n}(C^*(G)\otimes C_0(0,1))\ar[r]&K_{*+n}(C^*(G_{ad}^{[0,1)}))\ar[r]&K_{*+n}(C^*(\mathfrak{A}G))\ar[r]&\cdots}
\end{equation} 
 exists and commutes.

\subsection{The almost isometric case}\label{ai}

Even if the constructions in this section will not be used for the applications, it is still important to present them in order to establish a solid direction for a future generalization of the  product formulas proved in this article. 

Let us consider two foliation $\mathcal{F}_1\subset \mathcal{F}_2$ of rank $n_1$ and $n_2$, respectively, over $M$. Let us denote by $G_1$ and $G_2$ the monodromy groupoids associated with $\mathcal{F}_1$ and $ \mathcal{F}_2$,  respectively. 
The inclusion of the two foliations induces an immersion $\iota\colon G_1\to G_2$ of Lie groupoids. Let us denote by $d\iota$ the inclusion of Lie algebroids $\mathcal{F}_1\hookrightarrow \mathcal{F}_2$ . 
Let us consider the normal groupoid $\mathcal{N}_\iota$, it is isomorphic to $G_1^{\mathcal{F}_2/\mathcal{F}_1}$. Recall that  $\mathcal{F}_2/\mathcal{F}_1$
is a $G_1$-vector bundle over $M$.

\begin{definition}{\cite[Remark 4.3]{HSk}}
	We say that  $\mathcal{F}_2/\mathcal{F}_1$ is almost isometric if there exists a $G_1$-invariant subbundle $E$, of rank $k$, and a splitting $\mathcal{F}_2/\mathcal{F}_1\cong E\oplus E'$ such that both $E$ and $E'$ are endowed with a $G_1$-equivariant isometric structure. This means that the structural group of $\mathcal{F}_2/\mathcal{F}_1$ reduces to the group 
	\begin{equation}\label{aigroup}
	\mathcal{H}:=\left\{\begin{pmatrix}O_k & 0 \\ M_{k, k'}& O_{k'}\end{pmatrix}\right\}
	\end{equation}
	where $k$ and $k'$ are the ranks of $E$ and $E'$ respectively.
	We say that it is almost isometric in the generalized sense if there exists a sequence of $G_1$-invariant vector sub-bundles
	\[
	\mathcal{F}_2/\mathcal{F}_1= E_i\supseteq E_{i-1}\supseteq\dots\supseteq E_1\supseteq E_0=\{0\}
	\]
	with  $E_j/E_{j-1}$ endowed with a $G_1$-equivariant isometric structure  for $j= 1,2,\dots,i$. Namely the structural group of $\mathcal{F}_2/\mathcal{F}_1$ reduces to a group $\mathcal{H}$ of triangular block matrices analogous to \eqref{aigroup}.
	
\end{definition}

Let us assume that 
$\xymatrix{0\ar[r]&E'\ar[r]&\mathcal{F}_2/\mathcal{F}_1\ar[r]&E\ar[r]&0}$
 induces an almost isometric structure on the $G_1$-bundle $\mathcal{F}_2/\mathcal{F}_1$,  namely it is associated with a cocycle $i\colon G_1\to \mathcal{H}$ as in \eqref{aigroup}.
Moreover, let us assume that $\mathcal{F}_2/\mathcal{F}_1$ is $Spin$, hence we have a hermitian $G_1$-vector bundle $S$ of dimension $2^{\lfloor(n_2-n_1)/2\rfloor}$ and a $G_1$-equivariant homomorphism $cl\colon E^*\oplus (E')^*\to End(S)$ such that $cl(\xi)=cl(\xi)^*$, $cl(\xi)^2= \|\xi\|^2$ for all $\xi\in E^*\oplus (E')^*$.

Let $p\colon (\mathcal{F}_2/\mathcal{F}_1)^*\to (E')^* $ be the transposed of the inclusion $E'\hookrightarrow \mathcal{F}_2/\mathcal{F}_1$. Let $s\colon E\to \mathcal{F}_2/\mathcal{F}_1$ any section and let $q\colon (\mathcal{F}_2/\mathcal{F}_1)^*\to E^*$ be its transposed. Finally let $1/2<\rho<1$.

Let us put $b_q(x, \xi):=((1+\|p(\xi)\|^2+\|q(\xi)\|^2)q(\xi),p(\xi))\in (E^*\oplus (E')^*)_x$ for $(x,\xi)\in (\mathcal{F}_2/\mathcal{F}_1)^*_x$ and let us define a bounded multiplier of $C_0((\mathcal{F}_2/\mathcal{F}_1)^*; \pi^*S)$ as follows
\begin{equation}
a_q\colon (x,\xi)\mapsto (1+\|b_q(x,\xi)\|^2)^{-1/2}cl(b_q(x,\xi))\in End(S_x).
\end{equation}

Let us fix a point $x\in M$: we have a vector space  $V:=(\mathcal{F}_2/\mathcal{F}_1)^*_x$ and a representation $E:=S_x$ of $Cliff(V)$.
%It is immediate to see that $[C_0(V, E), F]$, where $F$ is just the multiplier $\xi\mapsto \xi\cdot(1+\|\xi\|^2)^{-1/2}$ as in Definition \ref{bott1} and $[C_0(V, E), a_q(x,\cdot)]$ define the same class in the Kasparov group $KK^{O_{q}\times O_{q'}}_{k}(\CC, C_0(V))$.
We naturally obtain 
 an element  $[C_0(V, E), a_q(x,\cdot)]$ in $KK^{\mathcal{H}}_{k}(\CC, C_0(V))$, where  $\mathcal{H}$ is as in \eqref{aigroup}.Thanks to \cite[Lemma 4.1]{HSk}, if we apply to this KK-element the construction of Definition \ref{bott3}, we obtain the element \begin{equation}\label{bott-a.i.}
\beta^{a.i.}(\iota)\in KK_k(C^*(G_1), C^*(\mathcal{N}_\iota)).
\end{equation} 

\begin{definition}
	The lower shriek class associated with an immersion of foliations $\iota\colon \mathcal{F}_1\to\mathcal{F}_2$ with almost isometric normal bundle is given by the element
	\begin{equation}
	\iota_!^{a.i.}:=\beta^{a.i.}(\iota)\otimes \partial(\iota)\in KK_k(C^*(G_1), C^*(G_2)). 
    \end{equation}
\end{definition}
It is obvious from the construction that if the normal bundle of $\iota$ has an isometric structure $\iota_!$ and $\iota^{a.i.}_!$ coincide. 
As in the previous section, it is now easy to define the lower shriek class associated with $\iota_{ad}^{[0,1)}$ and $d\iota$ and obtain diagram \eqref{diagram!} also in the almost isometric case. 

\begin{remark}
	It is immediate from the definition that this is an equivalent description of the class constructed in \cite[Definition 4.2]{HSk}. 
\end{remark}

\section{The equivariant setting}\label{s5}

\subsection{Semidirect products and imprimitivity bimodules}

 Let us first recall some abstract definitions from \cite[Section 3.2]{Kasparov-equiv}.

\begin{definition}\label{definition-equivariant}
Let $X$ be a proper, $\sigma$-compact $\Gamma$-space. Denote by $\{v_i\}$ some countable
approximate unit in $C_0(X/\Gamma)$. For any $\Gamma-C_0(X)$-algebra $B$, define $B^{\Gamma}$ as the
subalgebra of $\mathbb{B}(B)$ consisting of those $\Gamma$-invariant elements $b\in \mathbb{B}(B)$ for which
$f\cdot b\in B$, $\forall f \in C_0(X)$ , and $\lim\|v_ib-b\|=0$ (where elements $v_i$ are considered as
functions on $X$ via the projection $X\to X/\Gamma$). Clearly, $B^\Gamma$ is a $C_0(X/\Gamma)$-algebra.
Now, for any Hilbert $\Gamma-B$-module $\mathcal{E}$, one can define a Hilbert $B^\Gamma$-module
$\mathcal{E}^\Gamma$ as follows. The space of operators $\mathbb{B}(B,\mathcal{E})$ is a Hilbert $\mathbb{B}(B)$-
module. There is an  inclusion $\mathcal{E}\hookrightarrow\mathbb{B}(B,\mathcal{E})$ given by $e\mapsto \tilde{e}$, where $\tilde{e}(b)=e\cdot b$,
$\forall b\in B$. We shall define $\mathcal{E}^\Gamma$ as the subspace of $\mathbb{B}(B,\mathcal{E})$ consisting of those $\Gamma$-invariant
elements $\tilde{e}\in \mathbb{B}(B,\mathcal{E})$ for which $f\cdot \tilde{e}\in \mathcal{E}$, $\forall f\in C_0(X)$, and $\lim\|v_i\tilde{e}-\tilde{e}\|=0$.
\end{definition}

\bigskip

Let $\Gamma$ be a group and let $G\rightrightarrows G^{(0)}$ be a Lie groupoid. Consider an homomorphism $\omega\colon \Gamma\to \mathrm{Aut}(G)$. This induces an action $G\times\Gamma\to G$ given by $(g,\gamma)\mapsto \omega_{\gamma^{-1}}(g)$.

\begin{definition}We can define a groupoid structure on $G\times\Gamma$ over $G^{(0)}$, which we will denote by $G\rtimes_{\omega}\Gamma$, in the following way:
	\begin{itemize}
		\item the source and range maps are given by $s(g,\gamma):=\omega_{\gamma^{-1}}(s(g))$ and $r(g,\gamma):=r(g)$;
		\item the inverse is given by $(g,\gamma)^{-1}:=(\omega_{\gamma}(g^{-1}),\gamma^{-1})$;
		\item the product is defined as follows: $(g,\gamma)\cdot(g',\gamma'):=(g\cdot\omega_{\gamma}(g'),\gamma\gamma')$.
	\end{itemize}
\end{definition}
\begin{proposition}
	Let $\Gamma$ act on $G$ in a free and properly continuous way. Then $G\rtimes\Gamma\rightrightarrows G^{(0)}$ is Morita equivalent to $G/\Gamma\rightrightarrows G^{(0)}/\Gamma$.
\end{proposition}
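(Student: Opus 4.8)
The plan is to prove the statement by exhibiting an explicit equivalence bibundle linking the two groupoids, in the sense of groupoid Morita equivalence. Write $H:=G\rtimes_{\omega}\Gamma$ and $K:=G/\Gamma$, so that $H^{(0)}=G^{(0)}$ and $K^{(0)}=G^{(0)}/\Gamma$, and let $[\,\cdot\,]$ denote the orbit maps of the free proper right $\Gamma$-action $g\cdot\gamma=\omega_{\gamma^{-1}}(g)$ (note that automorphisms preserve units, so $\Gamma$ acts on $G^{(0)}$, and freeness on $G$ forces freeness on $G^{(0)}$). As linking space I would take $Z:=G$ itself, endowed with a commuting pair consisting of a right $H$-action and a left $K$-action. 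For the right $H$-action I use the moment map $\sigma:=s\colon Z\to H^{(0)}=G^{(0)}$ and set
\begin{equation*}
g\cdot(g',\gamma):=\omega_{\gamma^{-1}}(g\cdot g'),\qquad\text{defined whenever } s(g)=r(g'),
\end{equation*}
for which the right-action identities reduce to the equivariance $\omega_{\gamma^{-1}}(g\,g')=\omega_{\gamma^{-1}}(g)\,\omega_{\gamma^{-1}}(g')$ together with the definition of the product in $H$.

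For the left $K$-action I use the moment map $\rho:=[\,r(\cdot)\,]\colon Z\to K^{(0)}=G^{(0)}/\Gamma$. Given $[g']\in K$ with $[s(g')]=[r(g)]$, freeness of the $\Gamma$-action on $G^{(0)}$ provides a \emph{unique} $\gamma\in\Gamma$ with $s(g')=\omega_{\gamma}(r(g))$, and I set
\begin{equation*}
[g']\cdot g:=\omega_{\gamma^{-1}}(g')\cdot g,
\end{equation*}
which is composable since $s(\omega_{\gamma^{-1}}(g'))=\omega_{\gamma^{-1}}(s(g'))=r(g)$. The first checks are that this is independent of the chosen representative of $[g']$ (replacing $g'$ by $\omega_{\delta}(g')$ changes the distinguished element from $\gamma$ to $\delta\gamma$ and leaves $\omega_{\gamma^{-1}}(g')$ unchanged) and that it is a genuine left action; a direct computation then shows that the two actions commute and that each moment map is invariant under the opposite action.

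With the bibundle in place, the proof reduces to three verifications. First, a short computation identifies the right $H$-orbit of $g$ with the $\rho$-fibre $r^{-1}([r(g)])$ and the left $K$-orbit of $g$ with the $\sigma$-fibre $G_{s(g)}$, so that $\rho$ descends to a diffeomorphism $Z/H\xrightarrow{\ \sim\ }K^{(0)}$ and $\sigma$ to a diffeomorphism $K\backslash Z\xrightarrow{\ \sim\ }H^{(0)}$. Second, both actions are free: if $g\cdot(g',\gamma)=g$, taking ranges and using freeness of the $\Gamma$-action forces $\gamma=e$ and hence $g'=s(g)$, so $(g',\gamma)$ is a unit of $H$, and symmetrically for $K$. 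Third, both actions must be proper, and I expect this, together with the preliminary fact that $G/\Gamma\rightrightarrows G^{(0)}/\Gamma$ is genuinely a Lie groupoid (Hausdorffness and smoothness of the quotients), to be the main technical obstacle; both follow from the hypothesis that $\Gamma$ acts freely and properly on $G$, which makes $G\to G/\Gamma$ and $G^{(0)}\to G^{(0)}/\Gamma$ principal $\Gamma$-bundles. Once these points are settled, $Z$ is a $(K,H)$-equivalence bibundle, and therefore $H=G\rtimes\Gamma$ and $K=G/\Gamma$ are Morita equivalent, as claimed.
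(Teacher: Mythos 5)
Your proof is correct, and in substance it is the same construction as the paper's: the equivalence space you take, $Z=G$ with the two translation-type actions, is precisely the off-diagonal corner $\overline{G}^{-1}$ of the linking groupoid $L=G\rtimes\Gamma\sqcup\overline{G}\sqcup\overline{G}^{-1}\sqcup G/\Gamma$ that the paper builds, and your action formulas are exactly what the paper's mixed composition rules in $L$ amount to. The difference is the packaging. The paper assembles everything into one Lie groupoid over $G^{(0)}\sqcup G^{(0)}/\Gamma$ and concludes by citing Debord--Lescure's linking-groupoid definition of Morita equivalence, whereas you verify the principal-bibundle axioms directly: well-definedness of the actions, their commutation, freeness, properness, and the identification of the two orbit spaces $Z/H\cong K^{(0)}$ and $K\backslash Z\cong H^{(0)}$. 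What the paper's packaging buys is precisely what it needs afterwards: $C^*(L)$ serves as the link algebra and the closure of $C^\infty_c(\overline{G})$ inside it is the imprimitivity bimodule $\mathcal{M}^G_\Gamma$ of Definition \ref{imprimitivity}; from your bibundle one obtains the same bimodule (compactly supported functions on $Z$ with the two convolution-type inner products), but an extra step---forming the linking algebra, or invoking the theorem that a groupoid equivalence induces a Morita equivalence of groupoid C*-algebras---is still needed to reach the C*-statement actually used later in Section \ref{s5}. What your route buys is explicitness: the verifications the paper delegates to the citation are carried out, and your formulas also repair a small typographical inconsistency in the paper, where mixed products such as $(g,\gamma)\cdot g'$ are written as pairs rather than as elements of the copy $\overline{G}$ of $G$. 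One caveat, common to both arguments: properness of the actions and smoothness/Hausdorffness of the quotients are treated loosely, and for non-Hausdorff $G$ (e.g.\ monodromy groupoids of foliations) they require care; since the paper glosses over this as well, it is not a gap relative to the paper's own standard of rigor.
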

\begin{proof}
	Let $q\colon G\to G/\Gamma$ denote the quotient map.
	Consider the following groupoid
	$$L:= G\rtimes\Gamma\sqcup \overline{G}\sqcup \overline{G}^{-1}\sqcup G/\Gamma\rightrightarrows G^{(0)}\sqcup G^{(0)}/\Gamma$$
	where as sets $G=\overline{G}=\overline{G}^{-1}$ and
	\begin{itemize}
		\item $s_L$ is equal to: the corresponding source maps on $G\rtimes\Gamma$ and $G/\Gamma$, $q\circ s_G$ on $\overline {G}$ and $s_G$ on $\overline {G}^{-1}$;
		\item  $r_L$ is equal to: the corresponding range maps on $G\rtimes\Gamma$ and $G/\Gamma$, $r_G$ on $\overline {G}$ and $q\circ r_G$ on $\overline {G}^{-1}$;
		\item $i_L$ is equal to: the corresponding inverse maps on $G\rtimes\Gamma$ and $G/\Gamma$, sends $g\in \overline {G}$  to $g^{-1}\in \overline {G}^{-1}$ and similarly for elements in $\overline {G}^{-1}$;
		\item the composition given by:
		\begin{itemize} 
			\item the corresponding one on $G\rtimes\Gamma$ and $G/\Gamma$; 
			\item $(g,\gamma)\cdot g':= (g\cdot\omega_\gamma(g'),\gamma)$ for $(g,\gamma)\in G\rtimes\Gamma$ and $g'\in \overline{G}$;
			\item  $g'\cdot(g,\gamma):=(g'\cdot g,\gamma)$ for $g'\in \overline {G}^{-1}$ and $(g,\gamma)\in G\rtimes\Gamma$; $g\cdot q(g'):=g\cdot \omega_{\gamma'}(g')$ for $g\in \overline{G}$ and $q(g')\in G/\Gamma$, where $\gamma'$ is uniquely determined so that $s(g)=r(\omega_{\gamma}(g'))$;
			\item			 similarly $q(g)\cdot g'= \omega_{\gamma}(g)\cdot g'$ for $q(g)\in G/\Gamma$ and $g'\in \overline {G}$;
			\item for  $g\in\overline{G}$  and $g'\in \overline{G}^{-1}$ with $s(g)=\omega_{\gamma'}(r(g'))$,  $g\cdot g'= g\cdot\omega_{\gamma'}(g')$; 
			\item finally for $g\in\overline{G}^{-1}$  and $g'\in \overline{G}$, we set $g\cdot g'=q(g\cdot g')$.
		\end{itemize}
	\end{itemize}
	By \cite[Definition 2]{Debord-Lescure} the result follows.
\end{proof}
The proof of this classical result is useful here to recall the explicit construction of a link algebra and then the imprimitivity bimodules associated with the Morita equivalence between $C^*(G\rtimes\Gamma)$ and $C^*(G/\Gamma)$. Indeed, $C^*(L)$ does the role of the link algebra and the imprimitivity bimodule is given by the completion of $C^\infty_c(\overline{G})$ inside $C^*(L)$. 

\begin{definition}\label{imprimitivity}
	Let us denote the imprimitivity $C^*(G\rtimes\Gamma),C^*(G/\Gamma)$-bimodule associated with the previous Morita equivalence by $\mathcal{M}_\Gamma^G$.
\end{definition}

	Let $E\to G^{(0)}$ be a $\Gamma$-equivariant vector bundle. Denote by $\mathcal{E}_c$ the $C_c^\infty(G)$-module $C^\infty_c(G, r^*E\otimes \Omega^{\frac{1}{2}})$ and by $\mathcal{E}$ its C*-completion with respect to the usual $C^*(G)$-inner product 
	\begin{equation}\langle\xi,\xi'\rangle(\gamma):=\int_{G_{s(\gamma)}}\langle\overline{\xi(\gamma\eta^{-1})}\xi'(\eta)\rangle_E,\end{equation}
	 moreover recall that the right action of $C^*(G)$ on $\mathcal{E}$ is given by \begin{equation}\label{rightpr}\xi\cdot f(\gamma):=\int_{G_{s(\gamma)}}\xi(\gamma\eta^{-1})f(\eta).\end{equation}
	As in \cite[Definition 3.8]{Kasparov-equiv}, associated with the action of $\Gamma$ on $\mathcal{E}$, we can construct the $C^*(G)\rtimes\Gamma$-module $\mathcal{E}\rtimes\Gamma$ as the C*-completion of $C_c\left(\Gamma,C^\infty_c(G, r^*E\otimes \Omega^{\frac{1}{2}})\right)$.
	
    Another way to obtain a $C^*(G)\rtimes\Gamma$-module from   $\mathcal{E}_c$ is to endow it with a $C^*(G)\rtimes\Gamma$-valued inner product $\langle\cdot,\cdot\rangle_{\rtimes}$ defined by $\langle\xi,\eta\rangle_{\rtimes}(\gamma):=\langle\xi,\omega_\gamma^*(\eta)\rangle$. Let us denote its C*-completion by $\mathcal{E}^{\rtimes}$.

    Furthermore, observe that $\mathcal{E}^\Gamma$, the $C^*(G/\Gamma)$-module of $\Gamma$-invariant elements of $\mathcal{E}$, is isomorphic to the $C^*(G/\Gamma)$-completion of $C^\infty_c(G/\Gamma, r^*\bar{E}\otimes \Omega^{\frac{1}{2}})$, where $\bar{E}$ is the quotient of $E$ by the action of $\Gamma$.
    
Finally, notice that both $\mathcal{E}^\Gamma$ and $\mathcal{E}^\rtimes$ are left $C(G^{(0)}/\Gamma)$-modules, that $\mathcal{E}$ is a  left $C(G^{(0)})$-module and then that $\mathcal{E}\rtimes\Gamma$ is a left $C_0(G^{(0)})\rtimes\Gamma$-module.

    \begin{lemma}\label{iso-modules}
    	There exists an isomorphism   between  $\mathcal{E}^\rtimes$ and $\mathcal{E}^\Gamma\otimes_{C^*(G/\Gamma)}(\mathcal{M}_\Gamma^{G})^{-1}$ as $C^*(G)\rtimes\Gamma$-modules.
    \end{lemma}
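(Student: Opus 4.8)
The plan is to exhibit an explicit unitary isomorphism of Hilbert $C^*(G)\rtimes\Gamma$-modules by working on the natural dense subspaces, after recalling the standard identification $C^*(G\rtimes\Gamma)\cong C^*(G)\rtimes\Gamma$, under which $(\mathcal{M}_\Gamma^G)^{-1}$ becomes a right $C^*(G)\rtimes\Gamma$-module. I use the concrete models: $\mathcal{E}^\Gamma$ is the completion of $C^\infty_c(G/\Gamma, r^*\overline{E}\otimes\Omega^{\frac{1}{2}})$, an element of which I regard as its $\Gamma$-invariant lift $\tilde\xi\in C^\infty(G, r^*E\otimes\Omega^{\frac{1}{2}})^\Gamma$ (well defined since $q\colon G\to G/\Gamma$ is a principal $\Gamma$-bundle); $(\mathcal{M}_\Gamma^G)^{-1}$ is the completion of $C^\infty_c(\overline{G}^{-1})$ inside $C^*(L)$; and $\mathcal{E}^\rtimes$ is the completion of $\mathcal{E}_c=C^\infty_c(G, r^*E\otimes\Omega^{\frac{1}{2}})$ for the inner product $\langle\cdot,\cdot\rangle_\rtimes$. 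Since an inner-product–preserving map of Hilbert modules with dense range automatically extends to a unitary which is then $C^*(G)\rtimes\Gamma$-linear, and since an isometry automatically annihilates the null space of the internal tensor product (in particular the balancing relations over $C^*(G/\Gamma)$), it suffices to produce a bilinear map on the dense subspaces which is isometric for the $C^*(G)\rtimes\Gamma$-valued inner products and has dense range.

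First I would define, on the algebraic tensor product $C^\infty_c(G/\Gamma, r^*\overline{E}\otimes\Omega^{\frac{1}{2}})\odot C^\infty_c(\overline{G}^{-1})$, a convolution pairing valued in $\mathcal{E}_c$ of the form
\begin{equation*}
\Phi(\bar\xi\otimes m)(g)=\int_{G^{r(g)}}\tilde\xi(h)\,m(h^{-1}g),\qquad g\in G,
\end{equation*}
where the half-density factors render the expression intrinsic. Although $\tilde\xi$ is only $\Gamma$-compactly supported, properness of the $\Gamma$-action guarantees that the integrand is genuinely compactly supported, so that $\Phi(\bar\xi\otimes m)\in\mathcal{E}_c$. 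That $\Phi$ is balanced over $C^\infty_c(G/\Gamma)$ and right $C^\infty_c(G\rtimes\Gamma)$-linear follows formally from associativity of the convolution product of the link groupoid $L$ together with the $\Gamma$-invariance of $\tilde\xi$, using the explicit composition laws of $L$ recorded above.

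The crux is to check that $\Phi$ is isometric, that is, that for elementary tensors
\begin{equation*}
\langle \bar\xi_1\otimes m_1,\, \bar\xi_2\otimes m_2\rangle_{\otimes}=\langle \Phi(\bar\xi_1\otimes m_1),\, \Phi(\bar\xi_2\otimes m_2)\rangle_{\rtimes}\quad\text{in }\ C^*(G)\rtimes\Gamma.
\end{equation*}
On the left one expands the internal tensor product inner product as $\langle m_1,\langle\bar\xi_1,\bar\xi_2\rangle_{\mathcal{E}^\Gamma}\, m_2\rangle$, where $\langle\bar\xi_1,\bar\xi_2\rangle_{\mathcal{E}^\Gamma}\in C^*(G/\Gamma)$ is an integral over the fibres of $q$ and the outer inner product is the $C^*(G\rtimes\Gamma)$-valued one of $(\mathcal{M}_\Gamma^G)^{-1}$; on the right one uses $\langle\xi,\eta\rangle_{\rtimes}(\gamma)=\langle\xi,\omega_\gamma^*\eta\rangle_{C^*(G)}$ and the definition of $\Phi$. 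Both sides unwind to iterated integrals over the $s$- and $r$-fibres of $G$, over $\Gamma$, and over the fibres of $q\colon G\to G/\Gamma$, and I expect that a single change of variables — trading the integration along a fibre of $q$ for a summation over $\Gamma$ — identifies them. This is precisely where freeness and properness enter: freeness makes each fibre of $q$ a free $\Gamma$-orbit, so that the change of variables is a bijection, while properness ensures convergence of the integrals and of the $\Gamma$-sum. The half-density bookkeeping must be carried out carefully so that the Jacobian factors introduced in passing between $G$ and $G/\Gamma$ cancel exactly. This computation is the main obstacle; everything else is formal.

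Finally I would verify that $\Phi$ has dense range: its image contains all convolutions of $\Gamma$-invariant sections with compactly supported functions on $\overline{G}^{-1}$, and an approximate-unit argument — taking $\bar\xi$ supported near the units and $m$ arbitrary — shows that these are dense in $\mathcal{E}_c$, hence in $\mathcal{E}^\rtimes$. Being isometric with dense range, $\Phi$ descends to the internal tensor product and extends to a unitary isomorphism of Hilbert $C^*(G)\rtimes\Gamma$-modules, automatically $C^*(G)\rtimes\Gamma$-linear, which is the desired isomorphism.
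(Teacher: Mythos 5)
Your proposal is correct and takes essentially the paper's route: your convolution pairing $\Phi$ is precisely the paper's map $\alpha(\tilde{\eta}\otimes\xi)=\tilde{\eta}(\xi)$ (the operator picture of Definition \ref{definition-equivariant} unwinds to exactly your integral, cf.\ \eqref{rightpr}), and both arguments conclude by showing this map is isometric with dense range on the natural dense subspaces, hence extends to the desired unitary. The only difference is bookkeeping: the paper delegates the isometry and density verification to the reasoning of \cite[Lemma 3.2]{Kasparov-equiv}, adapted to the $C^*(G)\rtimes\Gamma$-module structure, whereas you sketch the same verification by a direct change of variables trading integration over the fibres of $q$ for summation over $\Gamma$.
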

\begin{proof}
	Consider the morphism $\alpha\colon \mathcal{E}^\Gamma_c\otimes_{C_c^{\infty}(G/\Gamma)}C_c^\infty(G)\to \mathcal{E}_c$ given by $\alpha(\tilde{\eta}\otimes\xi)=\tilde{\eta}(\xi)$. Following the reasoning in \cite[Lemma 3.2]{Kasparov-equiv} - notice that here we consider an isomorphism of $C^*(G)\rtimes\Gamma$-modules whereas there it is considered a morphism of $C^*(G)$-modules -  it is easy to prove that this is an isometric map with dense range. 
\end{proof}

\begin{remark}
	Observe that from the proof of \cite[Theorem 3.14]{Kasparov-equiv}, we have the following isomorphisms of $C_0(G^{(0)}),C^*(G)\rtimes\Gamma$-bimodules
	\begin{equation}
	\mathcal{E}^\rtimes\cong\mathcal{E}^\Gamma\otimes_{C^*(G/\Gamma)} (\mathcal{M}_\Gamma^{G})^{-1}\cong (\mathcal{M}_\Gamma^{G^{(0)}})^{-1}\otimes_{C_0(G^{(0)})\rtimes\Gamma} \mathcal{E}\rtimes\Gamma
	\end{equation}
\end{remark}
%\begin{remark}
%	Let us assume that the action of $\Gamma$ on $G^{(0)}$ is cocompact. Then the map $pt\colon G^{(0)}/\Gamma\to \ast$ is proper and we can consider 
%\end{remark}
%  
% Let $h\colon G^{(0)}\to [0,1]$ be a  cut-off function for the action of $\Gamma$, namely a non-negative function such that $\mathrm{supp}(h)\cap \Gamma\cdot K$ is compact for any compact subset $K\subset G^{(0)}$ and $\sum_{\Gamma}\gamma\cdot h(x)=1$ for all $x\in G^{(0)}$.
% \begin{lemma}
% 	Let $T$ be a regular operator on $\mathcal{E}^\Gamma$. Consider the operator $T\otimes 1$ on $\mathcal{E}^\rtimes$ is equal to $T^\rtimes:=\sum_{\Gamma}\gamma\cdot(hT)$
% \end{lemma} 
% \begin{proof}
%\end{proof}
%
%----------
%
%QUI ISOMORFISMI DI MODULI 
%
%---------

\subsection{Equivariant KK-classes}

Consider the following situation: let $\Gamma$ be a discrete group acting on the Lie groupoid $G$ in a free and properly discontinuous way; let $i\colon G\to  \mathcal{H}$ be a $\Gamma$-invariant cocycle, namely it descends to a cocycle $l\colon G/\Gamma\to \mathcal{H}$.  Then we have the following result. 

\begin{proposition}\label{eq-bott}
	The cocycle $i$ defines a map $i\colon KK^{\mathcal{H}}(A,B)\to KK^\Gamma(A\rtimes_i G,B\rtimes_i G)$ and the following equality holds
	\begin{equation}\label{equiv-cocycle}
	\mathcal{M}_{A,\Gamma}^{-1}\otimes j^\Gamma(i^*(x))\otimes\mathcal{M}_{B,\Gamma}= l^*(x)
	\end{equation}
	where $x\in KK^{\mathcal{H}}(A,B)$ and $\mathcal{M}_{A,\Gamma}$ is an imprimitivity bimodule for the algebras $(A\rtimes_i G)\rtimes\Gamma$ and $A\rtimes_l(G/\Gamma)$ and similarly for $\mathcal{M}_{B,\Gamma}$.
\end{proposition}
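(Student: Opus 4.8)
The plan is to prove the two assertions in turn: the existence of the equivariant refinement of $i^*$ is a matter of carrying an extra group action through the construction of Section \ref{s2}, while the equality \eqref{equiv-cocycle} is the substance of the statement and rests on the naturality of the cocycle construction under groupoid Morita equivalence.

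For the first assertion I would repeat verbatim the construction of $i^*$ given in Section \ref{s2}, keeping track of the $\Gamma$-action at every stage. Since $i$ is $\Gamma$-invariant, the principal $\mathcal{H}$-bundle $P_i$ is $\Gamma$-equivariant and the transformation groupoid $G^{P_i}=P_i\rtimes G$ carries a $\Gamma$-action commuting with the residual $\mathcal{H}$-action; thus $\mathrm{Id}_{C^*(G^{P_i})}\otimes_\CC x$ is naturally $(\Gamma\times\mathcal{H})$-equivariant. Because the two actions commute, the $\mathcal{H}$-descent $j^{\mathcal{H}}$ can be performed $\Gamma$-equivariantly, and the Morita bimodules $M_{A,i}$, $M_{B,i}$ implementing $A\rtimes_i G\sim(A\otimes C^*(G^{P_i}))\rtimes\mathcal{H}$ are $\Gamma$-equivariant. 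Composing these $\Gamma$-equivariant data produces the class $i^*(x)\in KK^\Gamma(A\rtimes_i G, B\rtimes_i G)$.

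To prove \eqref{equiv-cocycle}, I would first compute $j^\Gamma(i^*(x))$. Since $i$ is $\Gamma$-invariant it extends to a cocycle $\tilde{i}\colon G\rtimes\Gamma\to\mathcal{H}$ with $P_{\tilde i}=P_i$, and one has the identification $(A\rtimes_i G)\rtimes\Gamma\cong A\rtimes_{\tilde i}(G\rtimes\Gamma)$ (crossing the groupoid C*-algebra with coefficients by $\Gamma$ is the same as taking the C*-algebra of the semidirect-product groupoid with the same coefficients). Using the multiplicativity of Kasparov's $\Gamma$-descent together with the fact that for commuting actions iterated descent agrees with descent by the product group, $j^\Gamma\circ j^{\mathcal{H}}=j^{\Gamma\times\mathcal{H}}$, I expect to obtain $j^\Gamma(i^*(x))=\tilde{i}^*(x)$, now a non-equivariant cocycle class over $G\rtimes\Gamma$; this reduces the problem to the naturality of the cocycle construction under the Morita equivalence $G\rtimes\Gamma\sim G/\Gamma$ recalled before Definition \ref{imprimitivity}, under which $P_{\tilde i}=P_i$ descends to $P_l=P_i/\Gamma$.

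It then remains to show $\mathcal{M}_{A,\Gamma}^{-1}\otimes\tilde{i}^*(x)\otimes\mathcal{M}_{B,\Gamma}=l^*(x)$, where $\mathcal{M}_{A,\Gamma}$, $\mathcal{M}_{B,\Gamma}$ are the $A$- and $B$-twisted analogues of $\mathcal{M}_\Gamma^G$ from Definition \ref{imprimitivity}. This is where I expect the main obstacle to lie: one must verify that the imprimitivity bimodules intertwine the whole cocycle machine over $G\rtimes\Gamma$ with that over $G/\Gamma$, i.e. that they carry $M_{\bullet,\tilde i}$ to $M_{\bullet,l}$ and are compatible with the external product $\mathrm{Id}\otimes_\CC x$ and the descent $j^{\mathcal{H}}$. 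I would carry this out by unwinding the explicit link algebra $C^*(L)$ underlying the Morita equivalence and using the Hilbert-module isomorphisms of Lemma \ref{iso-modules} and the subsequent Remark, which provide exactly the identification $\mathcal{E}^\rtimes\cong\mathcal{E}^\Gamma\otimes(\mathcal{M}_\Gamma^G)^{-1}$ needed to transport the modules, following the scheme of \cite[Theorem 3.14]{Kasparov-equiv}.
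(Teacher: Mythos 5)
Your proposal is correct and follows essentially the same route as the paper: both arguments rest on the $\Gamma$-equivariance of $\mathrm{Id}_{C^*(G^{P_i})}$ and of the Morita bimodules, the commutation of the descent maps $j^\Gamma$ and $j^{\mathcal{H}}$ for commuting actions, and the conjugation of crossed-product bimodules into quotient bimodules via imprimitivity bimodules as in \cite[Theorem 3.14]{Kasparov-equiv} and Lemma \ref{iso-modules}. Your intermediate step $j^\Gamma(i^*(x))=\tilde{i}^*(x)$ for the extended cocycle on $G\rtimes\Gamma$ is a clean repackaging of the paper's single chain of equalities, where the same identifications are carried out by inserting $j^{\mathcal{H}}(\mathcal{M}_{\Gamma}^{i,l})\otimes j^{\mathcal{H}}(\mathcal{M}_{\Gamma}^{i,l})^{-1}$ and regrouping.
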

\begin{proof}
	The first statement follows from the fact that the KK-element $\mathrm{Id}_{C^*(G^{P_i})}$ actually lies in $KK^\Gamma(C^*(G^{P_i}),C^*(G^{P_i}))$ and \eqref{equiv-cocycle} follows from the following calculations:
	\begin{equation}
	\begin{split}
	&\mathcal{M}_{A,\Gamma}^{-1}\otimes j^\Gamma(i^*(x))\otimes\mathcal{M}_{B,\Gamma}=\\
	=&\mathcal{M}_{A,\Gamma}^{-1}\otimes j^\Gamma(\mathcal{M}_{A,i}\otimes j^{\mathcal{H}}(\mathrm{Id}_{C^*(G^{P_i})}\otimes x)\otimes \mathcal{M}^{-1}_{B,i})\otimes\mathcal{M}_{B,\Gamma}=\\
	=&\mathcal{M}_{A,\Gamma}^{-1}\otimes j^\Gamma(\mathcal{M}_{A,i})\otimes j^\Gamma\circ j^{\mathcal{H}}(\mathrm{Id}_{C^*(G^{P_i})}\otimes x)\otimes j^\Gamma(\mathcal{M}^{-1}_{B,i})\otimes\mathcal{M}_{B,\Gamma}=\\
	=&\mathcal{M}_{A,\Gamma}^{-1}\otimes j^\Gamma(\mathcal{M}_{A,i})\otimes j^{\mathcal{H}}(\mathcal{M}_{\Gamma}^{i,l})\otimes j^{\mathcal{H}}(\mathcal{M}_{\Gamma}^{i,l})^{-1}\otimes j^{\mathcal{H}}\circ j^\Gamma(\mathrm{Id}_{C^*(G^{P_i})}\otimes x)\otimes\\
	&\otimes  j^{\mathcal{H}}(\mathcal{M}_{\Gamma}^{i,l})\otimes j^{\mathcal{H}}(\mathcal{M}_{\Gamma}^{i,l})^{-1}\otimes  j^\Gamma(\mathcal{M}^{-1}_{B,i})\otimes\mathcal{M}_{B,\Gamma}=\\
	=&\mathcal{M}_{A,l}^{-1}\otimes j^\mathcal{H}\left((\mathcal{M}_{\Gamma}^{i,l})^{-1}\otimes j^\Gamma(\mathrm{Id}_{C^*(G^{P_i})}\otimes x)\otimes \mathcal{M}_{\Gamma}^{i,l}\right)\otimes \mathcal{M}_{B,l}=\\
	=&\mathcal{M}_{A,l}^{-1}\otimes j^\mathcal{H}\left(\left((\mathcal{M}_{\Gamma}^{i,l})^{-1}\otimes j^\Gamma(\mathrm{Id}_{C^*(G^{P_i})})\otimes \mathcal{M}_{\Gamma}^{i,l}\right)\otimes x\right)\otimes \mathcal{M}_{B,l}=\\
	=&\mathcal{M}_{A,l}^{-1}\otimes j^\mathcal{H}\left(C^*((G/\Gamma)^{P_l})\otimes x\right)\otimes \mathcal{M}_{B,l}=l^*(x)
	\end{split}
	\end{equation}
	where we used the distibuitivity of the descent maps and the fact that $j^\Gamma$ and $j^\mathcal{H}$ commute since the two actions commute; moreover $\mathcal{M}_\Gamma^{i,l}$ is the imprimitivity bimodule for the C*-algebras $C^*(G^{P_i}\rtimes\Gamma)$ and $C^*((G/\Gamma)^{P_l})$ as in Definition \ref{imprimitivity}; finally we repeatedly used the fact that a crossed product bimodule $\mathcal{E}\rtimes\Gamma$ is conjugated the to bimodule of invariant elements $\mathcal{E}^\Gamma$ by means of interior tensor product with suitable imprimitivity bimodules, see the proof of \cite[Theorem 3.14]{Kasparov-equiv} for the details about this fact. 
\end{proof}

\begin{proposition}\label{eq-partial}
	Let $\Gamma$ be a discrete group acting in a free and properly discontinuous way on both $G$ and $H$. Let $\iota\colon G\looparrowright H$ be a $\Gamma$-equivariant immersion. Then the element $\partial^G_H$ in \eqref{partial} belongs to $E^\Gamma(C^*(\mathcal{N}_H^G), C^*(G))$ and \begin{equation}(\mathcal{M}_\Gamma^{\mathcal{N}})^{-1}\otimes j^\Gamma(\partial^G_H)\otimes \mathcal{M}_\Gamma^G=\partial^{G/\Gamma}_{H/\Gamma}.\end{equation}
\end{proposition}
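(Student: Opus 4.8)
The plan is to establish the two assertions in turn, following closely the pattern of the proof of Proposition \ref{eq-bott}. For the equivariance claim I would first use that the deformation to the normal cone is a functorial construction (Remark \ref{imm-sub}, Definition \ref{def-dnc}): since $\iota\colon G\looparrowright H$ is $\Gamma$-equivariant, the actions of $\Gamma$ on $G$ and $H$ lift canonically to an action on $DNC(G,H)$ and restrict to an action on $\mathcal{N}_H^G$, with respect to which the inclusion, $\mathrm{ev}_0$ and $\mathrm{ev}_1$ in the extension \eqref{DNC-asymp} are all $\Gamma$-equivariant $*$-homomorphisms. The fibre $C^*(G)\otimes C_0(0,1]$ remains $\Gamma$-equivariantly contractible, so $\mathrm{ev}_0$ induces an \emph{invertible} element $[\mathrm{ev}_0]$ in equivariant E-theory. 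Carrying out the recipe of \cite[Lemma A.2]{kasp-sk} $\Gamma$-equivariantly then produces $\partial^G_H=[\mathrm{ev}_0]^{-1}\otimes[\mathrm{ev}_1]\in E^\Gamma(C^*(\mathcal{N}_H^G),C^*(G))$.

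For the descent formula I would apply the descent functor $j^\Gamma$, which is multiplicative with respect to the Kasparov product and sends inverses of invertibles to inverses, to obtain $j^\Gamma(\partial^G_H)=j^\Gamma([\mathrm{ev}_0])^{-1}\otimes j^\Gamma([\mathrm{ev}_1])$, where the two factors are the classes of the crossed-product morphisms $\mathrm{ev}_0\rtimes\Gamma$ and $\mathrm{ev}_1\rtimes\Gamma$. Because the free and properly discontinuous $\Gamma$-action commutes with the DNC construction and with passage to the normal bundle, one has the identifications $DNC(G,H)/\Gamma\cong DNC(G/\Gamma,H/\Gamma)$ and $\mathcal{N}_H^G/\Gamma\cong\mathcal{N}_{H/\Gamma}^{G/\Gamma}$, and Definition \ref{imprimitivity} applied to the groupoids $DNC(G,H)$ and $\mathcal{N}_H^G$ furnishes imprimitivity bimodules $\mathcal{M}_\Gamma^{DNC}$ and $\mathcal{M}_\Gamma^{\mathcal{N}}$ relating their crossed products to $C^*(DNC(G/\Gamma,H/\Gamma))$ and $C^*(\mathcal{N}_{H/\Gamma}^{G/\Gamma})$. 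Inserting the resolution $\mathcal{M}_\Gamma^{DNC}\otimes(\mathcal{M}_\Gamma^{DNC})^{-1}$ of the identity between the two factors of $j^\Gamma(\partial^G_H)$, the desired equality reduces to the two compatibility identities
\begin{align*}
(\mathcal{M}_\Gamma^{DNC})^{-1}\otimes j^\Gamma([\mathrm{ev}_0])\otimes \mathcal{M}_\Gamma^{\mathcal{N}} &= [\overline{\mathrm{ev}}_0],\\
(\mathcal{M}_\Gamma^{DNC})^{-1}\otimes j^\Gamma([\mathrm{ev}_1])\otimes \mathcal{M}_\Gamma^{G} &= [\overline{\mathrm{ev}}_1],
\end{align*}
where $\overline{\mathrm{ev}}_0$ and $\overline{\mathrm{ev}}_1$ denote the evaluation maps for the quotient groupoid $DNC(G/\Gamma,H/\Gamma)$. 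Granting these, the associativity of the Kasparov product and the fact that conjugation by invertibles respects inverses collapse the chain to $[\overline{\mathrm{ev}}_0]^{-1}\otimes[\overline{\mathrm{ev}}_1]=\partial^{G/\Gamma}_{H/\Gamma}$, which is the assertion.

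The heart of the argument, and the step I expect to be the main obstacle, is the proof of these two compatibility identities: conjugating the descent of an equivariant evaluation (restriction) morphism by the Morita bimodules should reproduce the corresponding evaluation morphism on the quotients. I would verify this on the dense subalgebras of compactly supported functions, using the explicit link-algebra description of the imprimitivity bimodules recalled after Definition \ref{imprimitivity} — the bimodules being completions of $C^\infty_c(\overline{G})$-type spaces — and checking that the evaluation maps intertwine the left and right module structures, exactly as in the final step of the proof of Proposition \ref{eq-bott}, where a crossed-product bimodule was conjugated to the bimodule of invariant elements. The one subtlety is that $[\mathrm{ev}_0]$ is genuinely an E-theory class rather than a KK-class, so the identities must be read at the level of asymptotic morphisms; however, since the bimodules $\mathcal{M}_\Gamma^{\cdot}$ are honest invertible KK-elements and the descent functor $j^\Gamma$ is defined on E-theory, this introduces no essential difficulty.
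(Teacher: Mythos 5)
Your proposal is correct and takes essentially the same approach as the paper: the paper's entire proof is the one-line observation that $\partial_H^G$ is the composition of the inverse of a morphism with a morphism, so that $\Gamma$-equivariance and compatibility with descent and Morita conjugation are automatic. Your expansion---lifting the $\Gamma$-action to $DNC(G,H)$ by functoriality, identifying $DNC(G,H)/\Gamma\cong DNC(G/\Gamma,H/\Gamma)$ and $\mathcal{N}_H^G/\Gamma\cong\mathcal{N}_{H/\Gamma}^{G/\Gamma}$, and checking that conjugating the descended evaluation morphisms by the imprimitivity bimodules yields the quotient evaluation morphisms---is precisely the detail the paper leaves implicit in the word ``immediate.''
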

\begin{proof}
	The proof is immediate, since the element in \eqref{partial} is the composition of a morphism and the inverse of a morphism.
\end{proof}

 From now on let us assume that the action of $\Gamma$ on $G^{(0)}$ is cocompact, so that the map $pt\colon G^{(0)}/\Gamma\to \ast$ is proper. Fix a $\Gamma$-invariant metric $\overline{g}$ on $AG$, which descends to a metric $g$ on $AG/\Gamma$. Thanks to Proposition \ref{eq-bott} and $\ref{eq-partial}$ one can easily see that the following equalities hold:
 \begin{equation}
 [\hat{cl}_{G/\Gamma}]=[pt]\otimes(\mathcal{M}_\Gamma^{G^{(0)}})^{-1}\otimes j^\Gamma(\beta(u))\otimes \mathcal{M}^{AG}_{\Gamma}, 
 \end{equation}
 where $\beta(u)$ belongs to $KK^\Gamma_n(C_0(G^{(0)}), C^*(AG))$;
\begin{equation}\label{lift-dirac}
[D_{G/\Gamma}]= [pt]\otimes(\mathcal{M}_\Gamma^M)^{-1}\otimes j^\Gamma([D_G])\otimes \mathcal{M}^G_{\Gamma},
\end{equation}
where $[D_G]$ is the class of the Dirac operator in $KK^\Gamma_n(C_0(G^{(0)}), C^*(G))$.

\begin{proposition}\label{lift-shriek}
	Let $\Gamma$ be a discrete group acting in a free and properly discontinuous way on both $G$ and $H$. Assume that $\overline{\iota}\colon G\to H$ is $\Gamma$-equivariant and that the normal bundle of $\bar{\iota}$ is associated with a $\Gamma$-invariant cocycle. Then $(\bar{\iota}_{ad}^{[0,1)})_!$  gives an element in $E^\Gamma_n(C^*(H_{ad}^{[0,1)}), C^*(G_{ad}^{[0,1)}))$. Moreover, let $\iota\colon G/\Gamma\to H/\Gamma$ be the immersion induced by $\iota$ between the quotients, then \begin{equation}
	(\mathcal{M}_\Gamma^{H_{ad}})^{-1}\otimes j^\Gamma\left((\overline{\iota}_{ad}^{[0,1)})_!\right)\otimes (\mathcal{M}_\Gamma^{G_{ad}})=(\iota_{ad}^{[0,1)})_!. 
	\end{equation}
\end{proposition}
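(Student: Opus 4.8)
The plan is to reduce the statement to the two descent results already established, Proposition~\ref{eq-bott} and Proposition~\ref{eq-partial}, by exploiting that the lower shriek class is, by construction, the interior product of a cocycle (Bott) class and a deformation-to-the-normal-cone boundary class.

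First I would record that, exactly as in the non-equivariant setting of Section~\ref{s4}, the immersion $\overline{\iota}_{ad}^{[0,1)}$ carries the factorization
$$(\overline{\iota}_{ad}^{[0,1)})_! = \beta(\overline{\iota}_{ad}^{[0,1)})\otimes\partial(\overline{\iota}_{ad}^{[0,1)}),$$
where $\beta(\overline{\iota}_{ad}^{[0,1)})$ is the Bott element of Definition~\ref{bott3} attached to the normal groupoid $\mathcal{N}:=\mathcal{N}_{\overline{\iota}_{ad}^{[0,1)}}$ through the cocycle $i$, and $\partial(\overline{\iota}_{ad}^{[0,1)})\in E(C^*(\mathcal{N}),C^*(G_{ad}^{[0,1)}))$ is the boundary element of Definition~\ref{E-dnc}. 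Since $i$ is $\Gamma$-invariant by hypothesis, the first assertion of Proposition~\ref{eq-bott} lifts $\beta(\overline{\iota}_{ad}^{[0,1)})$ to equivariant $KK^\Gamma$, while Proposition~\ref{eq-partial} places $\partial(\overline{\iota}_{ad}^{[0,1)})$ in $E^\Gamma$. The interior product of two equivariant classes is equivariant, which yields the first assertion of the proposition.

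For the descent formula I would apply the descent functor $j^\Gamma$ --- multiplicative with respect to the interior product --- to $(\overline{\iota}_{ad}^{[0,1)})_!=\beta\otimes\partial$ and conjugate by the relevant imprimitivity bimodules, inserting the identity $\mathcal{M}_\Gamma^{\mathcal{N}}\otimes(\mathcal{M}_\Gamma^{\mathcal{N}})^{-1}$ at the junction algebra $C^*(\mathcal{N})\rtimes\Gamma$. This splits the conjugated class as
$$\bigl((\mathcal{M}_\Gamma^{H_{ad}})^{-1}\otimes j^\Gamma(\beta)\otimes\mathcal{M}_\Gamma^{\mathcal{N}}\bigr)\otimes\bigl((\mathcal{M}_\Gamma^{\mathcal{N}})^{-1}\otimes j^\Gamma(\partial)\otimes\mathcal{M}_\Gamma^{G_{ad}}\bigr).$$
By Proposition~\ref{eq-bott} the first factor is $l^*(\beta^{\mathcal{H}}_n(\mathcal{S}))=\beta(\iota_{ad}^{[0,1)})$, the Bott class of the quotient immersion built from the descended cocycle $l$, and by Proposition~\ref{eq-partial} the second factor is $\partial(\iota_{ad}^{[0,1)})$. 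Multiplying gives $\beta(\iota_{ad}^{[0,1)})\otimes\partial(\iota_{ad}^{[0,1)})=(\iota_{ad}^{[0,1)})_!$, as claimed.

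The step I expect to be the main obstacle is justifying that the inserted middle bimodule $\mathcal{M}_\Gamma^{\mathcal{N}}$ is simultaneously the correct target bimodule for $\beta$ and source bimodule for $\partial$, so that the two invocations of Propositions~\ref{eq-bott}--\ref{eq-partial} genuinely share the same intermediate term and telescope. Concretely, one must check that the normal groupoid of the quotient immersion is canonically $\mathcal{N}/\Gamma$ and that $i$ descends to the cocycle defining its Bott class. A secondary technical point is that the whole argument lives in equivariant E-theory rather than KK-theory, because $\partial$ is in general only an invertible asymptotic morphism; one must therefore ensure that $j^\Gamma$ and interior tensoring by imprimitivity bimodules are defined and multiplicative on $E^\Gamma$, and that the mapping-cone presentation of $(\overline{\iota}_{ad}^{[0,1)})_!$ recalled in Remark~\ref{KK-mc} is compatible with descent and Morita conjugation --- that is, that these operations commute with the gluing that produces the $[0,1)$-class.
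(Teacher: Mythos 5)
Your proposal is correct and follows essentially the same route as the paper, whose entire proof is that the statement ``follows immediately from Proposition~\ref{eq-bott} and Proposition~\ref{eq-partial}'': you factor $(\overline{\iota}_{ad}^{[0,1)})_!$ as $\beta\otimes\partial$, descend each factor by the corresponding proposition, and telescope the imprimitivity bimodules at the intermediate normal-groupoid algebra. The technical points you flag (identification of the quotient normal groupoid, multiplicativity of $j^\Gamma$ and Morita conjugation in equivariant E-theory) are exactly the details the paper leaves implicit, so spelling them out is a faithful expansion rather than a different argument.
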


\begin{proof}
	It follows immediately from Proposition \ref{eq-bott} and Proposition \ref{eq-partial}.
\end{proof}

Now, let us consider a Dirac type operator $D_{G/\Gamma}$ acting on a $C^*(G/\Gamma)$-module $\mathcal{E}^\Gamma$, given by the completion of  $\overline{C^{\infty}_c(G/\Gamma, r^*\overline{E}\otimes\Omega^{\frac{1}{2}})}$, and let us assume that $D_{G/\Gamma}$ is invertible.
The operator $D_{G/\Gamma}\otimes 1$ on $\mathcal{E}^\Gamma\otimes(\mathcal{M}_\Gamma^{G})^{-1}$ defines, by means of the isomorphism $\alpha$ in Lemma \ref{iso-modules} an operator on $\mathcal{E}^{\rtimes}$ which we can identify in the following way. Take $\eta\in \overline{C^{\infty}_c(G/\Gamma, r^*\overline{E}\otimes\Omega^{\frac{1}{2}})}$ and $\xi\in C^\infty_c(G,\Omega^{\frac{1}{2}})$, then we have 
\begin{equation}\label{dirac-cross}
\begin{split}
&\alpha\circ D_{G/\Gamma}\otimes 1\circ\alpha^{-1}(\tilde{\eta}(\xi))=\\
=& \alpha\circ D_{G/\Gamma}\otimes 1(\tilde{\eta}\otimes \xi)=\\
=& \alpha(D_{G/\Gamma}(\tilde{\eta})\otimes \xi)=D_{G/\Gamma}(\tilde{\eta})(\xi).
\end{split}
\end{equation}
The last term, following Definition \ref{definition-equivariant}, corresponds to the $\Gamma$-equivariant lift $D_G^\Gamma$ of $D_{G/\Gamma}$ acting on the element $\eta\cdot \xi$ defined as in \eqref{rightpr}, which is an element  in $ C^{\infty}_c(G, r^*E\otimes\Omega^{\frac{1}{2}})$ seen as a $C^*(G)\rtimes\Gamma$-module. Here $E$ is the pull-back of $\overline{E}$ through the quotient map $G^{(0)}\to G^{(0)}/\Gamma$. Namely $D^\Gamma_G$ defines an operator on the Hilbert module $\mathcal{E}^\rtimes$. Finally observe that $D^{\Gamma}_G$ is invertible. 

\begin{proposition}\label{lift-rho}
	We have that $\varrho(D_{G/\Gamma})\otimes (\mathcal{M}^G_\Gamma)^{-1}= \varrho(D^\Gamma_G)\in KK(\CC,C^*(G)\rtimes \Gamma)$. 
\end{proposition}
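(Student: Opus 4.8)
The plan is to unwind the mapping-cone description of both sides and to verify that interior tensoring with the imprimitivity bimodule $(\mathcal{M}^G_\Gamma)^{-1}$ carries the pair of Kasparov bimodules representing $\varrho(D_{G/\Gamma})$ to the pair representing $\varrho(D^\Gamma_G)$, entry by entry. Since $D_{G/\Gamma}$ is invertible we take $A=0$ in Definition \ref{rho-def}, so that $\varrho(D_{G/\Gamma})$ is the class of the mapping-cone pair
\begin{equation*}
\left(\left(\mathcal{E}((G/\Gamma)_{ad}),\psi(D_{(G/\Gamma)_{ad}})\right),\left(\mathcal{E}(G/\Gamma)\otimes C_0[0,1],\psi_t(D_{G/\Gamma})\right)\right),
\end{equation*}
whose second entry is a homotopy from $\psi(D_{G/\Gamma})$ to the degenerate bimodule $\mathrm{sgn}(D_{G/\Gamma})$. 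Because a Kasparov product with a Morita-equivalence bimodule is implemented by interior tensor product over the coefficient algebra, and this is compatible with the mapping-cone gluing of \cite[Lemma 5.2]{An-Sk} recalled in Remark \ref{KK-mc}, it suffices to identify the interior tensor product of each entry with $(\mathcal{M}^G_\Gamma)^{-1}$.

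For the modules, Lemma \ref{iso-modules} provides the canonical isomorphism $\mathcal{E}^\Gamma\otimes_{C^*(G/\Gamma)}(\mathcal{M}^G_\Gamma)^{-1}\cong\mathcal{E}^\rtimes$ of $C^*(G)\rtimes\Gamma$-modules, implemented by the map $\alpha$; the same identification holds verbatim for the adiabatic modules and fiberwise in the homotopy parameter $t$. For the operators, the computation \eqref{dirac-cross} shows precisely that $\alpha$ intertwines $D_{G/\Gamma}\otimes 1$ with the $\Gamma$-equivariant lift $D^\Gamma_G$. As $\alpha$ is a unitary intertwining these self-adjoint regular operators, continuous functional calculus gives $\alpha\circ(\psi(D_{G/\Gamma})\otimes 1)\circ\alpha^{-1}=\psi(D^\Gamma_G)$ for every normalizing function $\psi$, hence also $\alpha\circ(\psi_t(D_{G/\Gamma})\otimes 1)\circ\alpha^{-1}=\psi_t(D^\Gamma_G)$ and $\alpha\circ(\mathrm{sgn}(D_{G/\Gamma})\otimes 1)\circ\alpha^{-1}=\mathrm{sgn}(D^\Gamma_G)$. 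As observed just before the statement, $D^\Gamma_G$ is invertible, so $\mathrm{sgn}(D^\Gamma_G)$ is degenerate and the tensored homotopy is again a homotopy to a degenerate endpoint. Applying these identifications to both entries shows that the tensored pair is exactly the mapping-cone pair defining $\varrho(D^\Gamma_G)$.

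The main obstacle is the bookkeeping making the two reductions above legitimate: one must check that the interior tensor product with $(\mathcal{M}^G_\Gamma)^{-1}$ (taken in its adiabatic incarnation, relating $C^*((G/\Gamma)_{ad}^{[0,1)})$ with $C^*(G_{ad}^{[0,1)})\rtimes\Gamma$) commutes with the mapping-cone construction, and that it respects both the adiabatic parameter $\hbar$ and the homotopy parameter $t$. The first point is the functoriality of the construction of \cite[Lemma 5.2]{An-Sk} with respect to a Morita equivalence of coefficient algebras; the second is automatic, since the imprimitivity bimodule is pulled back from the base groupoid and is therefore constant along $[0,1]_\hbar\times[0,1]_t$, so that the isomorphism $\alpha$ and the intertwining \eqref{dirac-cross} hold fiberwise and assemble into the required homotopies of bimodules.
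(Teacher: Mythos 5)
Your proposal is correct and takes essentially the same route as the paper's own proof: both unwind the mapping-cone representatives of the two $\varrho$-classes, identify the modules via Lemma \ref{iso-modules}, intertwine the operators via \eqref{dirac-cross}, and exploit the invertibility of $D^\Gamma_G$ --- equivalently the identity $\mathrm{sgn}(D_{G/\Gamma})\otimes 1=\mathrm{sgn}(D_{G/\Gamma}\otimes 1)$ --- so that the $CS_t$-gluing appearing in the product recipe \eqref{k-product-mc} is trivial. The only cosmetic difference is that the paper writes out the $\diamond$-gluing with its $CS_t$ correction explicitly and then observes it collapses, whereas you absorb this step into the unitary intertwining by $\alpha$.
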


\begin{proof}
	Let the subscript $ad$ indicate the Hilbert module associated with the pull-back of the vector bundle $E$ on the adiabatic deformation groupoid. 
	Then, let us recall that $\varrho(D_{G/\Gamma})$ is represented, as in \eqref{mc-class}, by 
	\begin{equation}
   \left[  \left(pt^*\mathcal{E}^\Gamma_{ad}, \psi(D_{G/\Gamma_{ad}})\right),\left(pt^*\mathcal{E}^\Gamma\otimes C_0[0,1], \psi_t(D_{G/\Gamma})\right)\right] \in KK(\CC, C^*(G/\Gamma_{ad}^{[0,1)}))
	\end{equation}
	as in \eqref{rho-bimodule}. 
	Analogously $\varrho(D^\Gamma_G)$ is represented by 
	\begin{equation}\label{rho-cross}
	\left[ \left(pt^*\mathcal{E}^\rtimes_{ad}, \psi(D_{G_{ad}})^\Gamma\right),\left(pt^*\mathcal{E}^\rtimes\otimes C_0[0,1], \psi_t(D^\Gamma_{G})\right)\right] \in KK(\CC, C^*(G_{ad}^{[0,1)})\rtimes\Gamma).
	\end{equation}
	Now, consider $\varrho(D_{G/\Gamma})\otimes_{C^*(G/\Gamma)}(\mathcal{M}^G_\Gamma)^{-1}$ and let us represent it by following the recipe given in \eqref{k-product-mc}. Thus, thanks to Lemma \ref{iso-modules}, we obtain the mapping cone class given, as in \eqref{mc-class}, by the pair
		\begin{equation}\begin{split}
	 &\left(pt^*\mathcal{E}^\rtimes_{ad}, \psi(D_{G/\Gamma_{ad}})\otimes 1\right) \,\mbox{and}\\
	 \left(pt^*\mathcal{E}^\rtimes\otimes C_0[0,1], \psi_t(D_{G/\Gamma})\otimes 1\right)
&	\diamond (pt^*\mathcal{E}^\rtimes\otimes C_0[0,1], CS_t(\mathrm{sgn}(D_{G/\Gamma})\otimes 1, \mathrm{sgn}(D_{G/\Gamma}\otimes 1)) 
		\end{split}
		\end{equation}
    which, thanks to \eqref{dirac-cross} and the fact that $\mathrm{sgn}(D_{G/\Gamma})\otimes 1= \mathrm{sgn}(D_{G/\Gamma}\otimes 1)$, is equal to \eqref{rho-cross}. 
\end{proof}

\section{Product formulas}\label{s6}

\subsection{Smooth fibrations}

Let $(M,g^M)$  and $(B,g^B) $ be two smooth compact connected  Riemannian manifolds and let $\pi\colon M\to B$ be a surjective Riemannian submersion.
This means that the tangent bundle $TM$ splits as $\ker d\pi\oplus \pi^*TB$ and that the metric $g^M$ can be expressed as the sum $g^{M/B}\oplus\pi^*g^B$, where $g^{M/B}$ is a metric for $\ker d\pi$.

Let $\nabla^\alpha$ be the Levi-Civita connection for $g^\alpha$, with $\alpha= M, B$ or $M/B$. We thus obtain two metric connections $\nabla^M $ and $\nabla^\oplus:=\nabla^{M/B}\oplus\pi^*\nabla^B$ on $TM$
whose difference is calculated in terms of the tensor
$\omega\in C^\infty(M;T^*M\otimes\bigwedge^2T^*M)$ defined by 
\begin{equation}
\begin{split}
\omega(X)(Y,Z):= &S(X,Z,Y)- S(X,Y, Z)+\\
+&\frac{1}{2}\left(\Omega(X,Z,Y)-\Omega(X,Y,Z)+\Omega(Y,Z,X)\right)
\end{split}
\end{equation}
for $X,Y,Z\in C^\infty(M;TM)$. Here, set $P$ the projection $TM\to \ker d\pi$, then
\begin{itemize}
	\item $S\in C^\infty(M;T^*M^{\otimes3})$ is the \emph{second fundamental form} defined by
\begin{equation}
S(X,Y,Z):=g^M\left(\nabla^{M/B}_{(1-P)Z}(PX)-[(1-P)Z,PX], PY\right),
	\end{equation}
	\item we will also need the trace of $S$, which gives a 1-form $k\in C^\infty(M;T^*M)$, the \emph{mean curvature}, defined as
	\begin{equation}
	k(X):=\sum_i S(e_i, e_i, X),
    \end{equation}
	\item $\Omega\in C^\infty(M; \bigwedge^2T^*M\otimes T^*M)$ is the \emph{curvature of the fibration $\pi$} defined by 
\begin{equation}
\Omega(X,Y,Z):= -g^M\left([(1-P)X,(1-P)Y],PZ\right),
	\end{equation}
\end{itemize}
for $X,Y,Z\in C^\infty(M;TM)$.

\begin{lemma}
	The connections $\nabla^M$ and $\nabla^\oplus$ are related by the following formula
	\begin{equation}\label{sumformula}
	g^M\left(\nabla^M_X Y, Z\right)= g^M\left(\nabla^\oplus_X Y, Z\right)+\omega(X)(Y,Z).
	\end{equation}
\end{lemma}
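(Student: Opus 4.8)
The plan is to exploit the fact that both $\nabla^M$ and $\nabla^\oplus$ are metric connections for $g^M$, so that their difference is governed entirely by the torsion of $\nabla^\oplus$. First I would check that $\nabla^\oplus=\nabla^{M/B}\oplus\pi^*\nabla^B$ is indeed compatible with $g^M$: this follows because the splitting $TM=\ker d\pi\oplus\pi^*TB$ is $g^M$-orthogonal and $\nabla^{M/B}$, $\pi^*\nabla^B$ are metric for $g^{M/B}$ and $\pi^*g^B$ respectively. Consequently the difference $D(X,Y):=\nabla^M_XY-\nabla^\oplus_XY$ is $C^\infty(M)$-bilinear, hence tensorial, and setting $\omega(X)(Y,Z):=g^M(D(X,Y),Z)$ the compatibility of both connections with $g^M$ forces $\omega(X)$ to be antisymmetric in $(Y,Z)$; this already accounts for the asserted target space $C^\infty(M;T^*M\otimes\bigwedge^2T^*M)$.

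Next I would record the standard contorsion identity. Since $\nabla^M$ is torsion-free while $\nabla^\oplus$ carries the torsion $T^\oplus(X,Y)=\nabla^\oplus_XY-\nabla^\oplus_YX-[X,Y]$, the relation $D(X,Y)-D(Y,X)=-T^\oplus(X,Y)$ together with the antisymmetry of $\omega(X)$ lets one solve the usual cyclic system and obtain
$$2g^M(D(X,Y),Z)=-\tau(X,Y,Z)+\tau(Y,Z,X)-\tau(Z,X,Y),$$
where $\tau(X,Y,Z):=g^M(T^\oplus(X,Y),Z)$. Thus everything reduces to an explicit computation of $T^\oplus$.

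The core of the argument is then the case analysis of $T^\oplus$ according to the decomposition into vertical and horizontal components via $P$ and $1-P$. When both arguments are vertical, the fiberwise term $\nabla^{M/B}_\bullet\bullet=P\nabla^M_\bullet\bullet$ is torsion-free along the fibers and $T^\oplus$ vanishes. The mixed vertical–horizontal case produces exactly the second fundamental form $S$, since there the obstruction measures how $\nabla^M$ tilts the fibers against the block-diagonal $\nabla^\oplus$. Finally, for two horizontal arguments the horizontal part of $T^\oplus$ cancels, because $\pi^*\nabla^B$ is pulled back from the torsion-free $\nabla^B$ and its antisymmetrization recovers the horizontal part of the bracket, while the vertical part equals $-P[X,Y]$, which is by definition the curvature $\Omega$ of the fibration. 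Substituting these three contributions into the contorsion identity and collecting terms yields the stated expression for $\omega(X)(Y,Z)$; the asymmetry between the $S$-terms, which appear without a factor $\tfrac{1}{2}$, and the $\Omega$-terms, which retain it, reflects how each torsion component is distributed among the three cyclic slots of $\tau$.

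The main obstacle I expect is precisely this bookkeeping in the mixed and horizontal cases: one must keep careful track of the projections $P$, $1-P$ and of which Lie brackets are vertical or horizontal, so that the cyclic contributions reassemble into the exact combination $S(X,Z,Y)-S(X,Y,Z)+\tfrac{1}{2}(\Omega(X,Z,Y)-\Omega(X,Y,Z)+\Omega(Y,Z,X))$ defining $\omega$. This step is routine but sign- and coefficient-sensitive, and comparison with \cite[Equation 4.46]{BismutCheeger} provides a useful consistency check.
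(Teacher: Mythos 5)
Your proof is correct; note that the paper itself offers no proof of this lemma --- it is quoted as standard background from the Bismut--Cheeger setting --- so your contorsion argument supplies exactly the step the paper leaves implicit. I verified that your outline closes up. Setting $A(X,Y,Z)=g^M(D(X,Y),Z)$ with $D=\nabla^M-\nabla^\oplus$: metric compatibility of both connections (which holds for $\nabla^\oplus$ because the splitting $TM=\ker d\pi\oplus\pi^*TB$ is $g^M$-orthogonal and each summand connection is metric) gives antisymmetry of $A$ in $(Y,Z)$; torsion-freeness of $\nabla^M$ gives $A(X,Y,Z)-A(Y,X,Z)=-\tau(X,Y,Z)$; and these two identities yield your formula $2A(X,Y,Z)=-\tau(X,Y,Z)+\tau(Y,Z,X)-\tau(Z,X,Y)$. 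Your case analysis of $T^\oplus$, computed on vertical fields and basic horizontal fields (which suffices, torsion being tensorial), gives $\tau(X,Y,Z)=S(Y,Z,X)-S(X,Z,Y)+\Omega(X,Y,Z)$, and substitution then produces exactly the $\omega$ of the paper. The one ingredient you leave implicit, and which is indispensable for the final bookkeeping, is the symmetry of $S$ in its first two arguments, $S(X,Y,Z)=S(Y,X,Z)$: after using torsion-freeness one has $S(X,Y,Z)=-g^M\bigl(\nabla^M_{PX}(PY),(1-P)Z\bigr)$, so this is precisely the symmetry of the second fundamental form of the fibers, a consequence of torsion-freeness of $\nabla^M$ together with integrability of $\ker d\pi$. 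Only with this symmetry (plus the antisymmetry of $\Omega$ in its first two slots) do the six cyclic $S$-contributions collapse to $2\bigl(S(X,Z,Y)-S(X,Y,Z)\bigr)$ while the three $\Omega$-contributions reassemble into the stated combination with the factor $\tfrac{1}{2}$; this symmetry/antisymmetry dichotomy, rather than a generic ``distribution over cyclic slots,'' is the true source of the coefficient asymmetry you point out.
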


In \cite{BismutCheeger} and \cite{KaadVS} the following factorization result is stated for the $Spin^{(c)}$ Dirac operator, but the proof, which essentially depends only on \eqref{sumformula}, works verbatim for generalized Dirac operator too. 
Consider the following objects:
\begin{itemize}
	\item the Dirac operator $D^B$ acting on the sections of a bundle  of $\mathrm{Cliff}(TB, g^{B})$-modules $E^B\to B$;
	\item the  Dirac operator $D^{M/B}$ acting on the sections of a bundle of $\mathrm{Cliff}(\ker d\pi, g^{M/B})$-modules $E^{M/B}\to M$.
\end{itemize}
Let us consider the following metric depending on a parameter $\varepsilon$
\begin{equation}
g^M_\varepsilon= g^{M/B}\oplus \varepsilon^{-1}\pi^*g_B,
\end{equation}
then the Dirac-type operator $D_\varepsilon^M$  acts on the sections of the bundle of $\mathrm{Cliff}(TM,g_\varepsilon^M)$-modules $E^M:=E^{M/B} \otimes\pi^*E^B$ and  can be written, as in \cite[Eq. (4.26)]{BismutCheeger}, in the following way 
\begin{equation}\label{factorization}
D^M_\varepsilon=  D^{M/B}\otimes 1+ \varepsilon^{\frac{1}{2}}\sum_{i}c(f_i)\widetilde{\nabla}^B_{f_i}- \frac{\varepsilon}{4}\sum_{i<j}c(f_if_j)c(\Omega(f_i,f_j)) 
\end{equation}
where $\widetilde{\nabla}^B_X= (\pi^*\nabla^B)_X-\frac{1}{2}k(X)$ for $X\in C^\infty(M;TM)$ and $\{f_i\}$ is a local orthonormal frame of $TM$.

In \cite{KaadVS} it is meticulously proved that 
the unbounded Kasparov $C(M)$-$\CC$-bimodule $(L^2(M;E^M), D^M_{\varepsilon})$ is a bounded perturbation of the unbounded Kasparov product of the unbounded $C(M)$-$C(B)$-bimodule $(L^2(M;E^{M/B}), D^{M/B})$ and the unbounded $C(B)$-$\CC$-bimodule $(L^2(B;E^B), D^B_{\varepsilon})$, where $D^B_{\varepsilon}$ is the Dirac operator associated with the metric $\varepsilon^{-1}g_B$ and where the bounded defect is given by the last term $- \frac{\varepsilon}{4}\sum_{i<j}c(f_if_j)c(\Omega(f_i,f_j))$ in \eqref{factorization}.

Now, notice that the Lie groupoid  $M\times_\pi M\rightrightarrows M$  is Morita equivalent to $B\rightrightarrows B$ and that $M\times M\rightrightarrows M$  is Morita equivalent to the trivial groupoid $pt\rightrightarrows pt$. Let us denote by $\mathcal{M}_\pi$ and $\mathcal{M}_{pt}$ the imprimitivity bimodules associated with these Morita equivalences. More precisely: $\mathcal{M}_\pi$ is the closure of $C_c^\infty(M)$ equipped with the $C(B)$-valued inner product $\langle f,g\rangle(b):=\int_{\pi^{-1}(b)}\overline{f(x)}g(x)$ with the obvious left action of $C_c^\infty(M\times_\pi M)$; $\mathcal{M}_{pt}$ is just $L^2(M)$ with the obvious left action of $C^*(M\times M)$.  Now, it is an easy observation that the previous factorization corresponds, by means of these Morita equivalences,  to the factorization at the unbounded level of the following composition of inclusions of Lie groupoids over $M$
\[
\xymatrix{M\ar[r]_(.35){u^\pi}\ar@/^2pc/[rr]^{u}&M\times_\pi M\ar[r]_{\iota}& M\times M}
\]
namely that $ u_!=u^\pi_!\otimes \iota_!$, where $u^\pi$ is the unit map of $M\times_\pi M $ and $u$ is the unit map of $M\times M$. Thanks to \eqref{symbol-beta}, \eqref{ad-class}, \eqref{ind-class}, by precomposing with the KK-class  $[pt]$ associated with the map $M\to pt$, this just gives us the unbounded factorization of the primary class $[D_{M\times M}]= [D_{M\times_\pi M}]\otimes \iota_!$.

In particular, with the notation of Definition \ref{def-ind-class} and $\simeq$ standing for unitarily equivalent, observe that:
\begin{itemize}
	\item $\left(\mathcal{E}(M\times M, E^M), D^{E^M}_{M\times M}\right)\otimes \mathcal{M}_{pt}\simeq(L^2(M;E^M), D^M)\in \mathbb{E}(\CC,\CC)$;  
	\item $\left(\mathcal{E}(M\times_\pi M, E^{M/B}), D^{E^{M/B}}_{M\times_\pi M}\right)\otimes \mathcal{M}_{\pi}\simeq(L^2(M\to B;E^{M/B}), D^{M/B})\in \mathbb{E}(\CC,C(B))$;
	\item then, up to the last bounded term in \eqref{factorization},
	\begin{equation}
	\begin{split}	
&\left(\mathcal{E}(M\times M, E^M), D^{E^M}_{M\times M}\right)\simeq\\
&\simeq \left(L^2(M;E^M), D^M\right)\otimes \mathcal{M}_{pt}^{-1}\simeq\\
	&\simeq \left(L^2(M\to B;E^{M/B}), D^{M/B}\right)\otimes \left(L^2(B;E^B), D^B\right)\otimes \mathcal{M}_{pt}^{-1}\simeq\\
	&\simeq \left(\left(\mathcal{E}(M\times_\pi M, E^{M/B}), D^{E^{M/B}}_{M\times_\pi M}\right)\otimes \mathcal{M}_{\pi}\otimes(L^2(B;E^B), D^B)\right)\otimes \mathcal{M}_{pt}^{-1}.
	\end{split}
	\end{equation} 
\item so that 
\begin{equation}\label{D-epsilon}D^{E^M}_{M\times M,\varepsilon}= D^{E^{M/B}}_{M\times_\pi M}\otimes1\otimes 1 + \varepsilon^{\frac{1}{2}} (1\otimes_{\nabla}D^B)\otimes1- \varepsilon A\end{equation}
where $1\otimes_{\nabla}D^B$ is a $D^B$-connection on $\left(\mathcal{E}(M\times_\pi M, E^{M/B})\otimes \mathcal{M}_\pi\right)\otimes L^2(B; E^B)$, unitarily equivalent to the second term in \eqref{factorization}, and $A$ is a zero order term corresponding to the last term in \eqref{factorization}.
\end{itemize}

The reason we did this detailed translation to the Lie groupoid setting of the factorization \eqref{factorization} is that we need a fine estimate of the commutator of $D^{E^{M/B}}_{M\times_\pi M}\otimes1\otimes 1$ and $D^{E^M}_{M\times M,\varepsilon}$ in order to apply Lemma \ref{lemma-cs-degenerate} in the following theorem. 
%\label{k-product-mc}

\begin{theorem}\label{product-fibration}
	Let $\pi\colon (M,g^M)\to (B,g^B)$ be a Riemannian submersion between $Spin$ manifolds. Let $g^{M/B}$ be a metric of $\ker d\pi$ with positive scalar curvature and let $\varrho(D_{M\times_\pi M})\in KK_n(\CC, C^*(M\times_\pi M_{ad}^{[0,1)}))$ be the associated $\varrho$-class. Let $\varepsilon_0$ be such that $g^M_{\varepsilon_0}$ has positive scalar curvature too. Then we have that 
	\begin{equation}\label{product-formula-fibrations}
\varrho(D_{M\times_\pi M})\otimes \iota_!^{ad}=\varrho(D^{\varepsilon_0}_{M\times M})\in KK_{n+k}(\CC, C^*(M\times M_{ad}^{[0,1)}))
	\end{equation}
	where $D$ denotes the $Spin$ Dirac operator, $\iota$ is the inclusion of Lie groupoids $M\times_\pi M\hookrightarrow M\times M$, $n=rk(\ker d\pi)$ and $k=\dim B$.
\end{theorem}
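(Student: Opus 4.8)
The plan is to compute the interior Kasparov product on the left-hand side of \eqref{product-formula-fibrations} by means of the mapping-cone recipe \eqref{k-product-mc}, and then to recognise the outcome as the canonical representative of $\varrho(D^{\varepsilon_0}_{M\times M})$. Recall first that $\varrho(D_{M\times_\pi M})$ is represented, as in \eqref{rho-bimodule}, by the adiabatic bimodule $\left(\mathcal{E}(M\times_\pi M_{ad}),\psi(D_{M\times_\pi M_{ad}})\right)$ together with a homotopy connecting its evaluation at $1$ to the degenerate bimodule $\left(\mathcal{E}(M\times_\pi M),\mathrm{sgn}(D^{M/B})\right)$; this endpoint is degenerate because $g^{M/B}$ has positive scalar curvature, so that $(D^{M/B})^2\geq c>0$ by Lichnerowicz and $D^{M/B}$ is invertible. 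Likewise $\iota_!^{ad}$ is, by Remark \ref{KK-mc}, the mapping-cone element glued from $(\iota_{ad})_!$ and $\iota_!\otimes\mathrm{id}$.

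First I would dispatch the adiabatic component of the product. The factorization $u_!=u^\pi_!\otimes\iota_!$ recorded before the statement, together with the resulting identity $[D_{M\times M}]=[D_{M\times_\pi M}]\otimes\iota_!$ on primary classes and the relations \eqref{symbol-beta}, \eqref{ad-class}, \eqref{ind-class}, shows that the Kasparov product of $\left(\mathcal{E}(M\times_\pi M_{ad}),\psi(D_{M\times_\pi M_{ad}})\right)$ with the adiabatic part of $\iota_!^{ad}$ is the adiabatic bimodule $\left(\mathcal{E}(M\times M_{ad}),\psi(D_{M\times M_{ad}})\right)$ that constitutes the first component of $\varrho(D^{\varepsilon_0}_{M\times M})$. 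Concretely this is the unbounded factorization already displayed in \eqref{D-epsilon}, read off at the level of adiabatic symbols.

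The content of the proof lies in the homotopy component. Applying \eqref{k-product-mc}, the Kasparov product of the degenerate endpoint $\left(\mathcal{E}(M\times_\pi M),\mathrm{sgn}(D^{M/B})\right)$ with $\iota_!$ gives, after the Morita identifications $\mathcal{M}_\pi$ and $\mathcal{M}_{pt}$, a bimodule on $M\times M$ whose operator is the bounded transform of the longitudinal term $D^{E^{M/B}}_{M\times_\pi M}\otimes 1\otimes 1$ in \eqref{D-epsilon}; it is again invertible, hence degenerate. On the other hand the second component of $\varrho(D^{\varepsilon_0}_{M\times M})$ ends in the degenerate bimodule $\left(\mathcal{E}(M\times M),\mathrm{sgn}(D^{M}_{\varepsilon_0})\right)$, which is invertible since $g^M_{\varepsilon_0}$ has positive scalar curvature. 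The whole point is therefore to produce a homotopy through degenerate bimodules between $\mathrm{sgn}\left(D^{E^{M/B}}_{M\times_\pi M}\otimes 1\right)$ and $\mathrm{sgn}(D^M_{\varepsilon_0})$, which is exactly what Lemma \ref{lemma-cs-degenerate} yields once the commutator hypothesis of that lemma is verified; the gluing $\diamond^{\mathrm{id}}_{\mathrm{ev}_1,\mathrm{ev}_0}$ of \eqref{k-product-mc} then turns the two components into the canonical representative of $\varrho(D^{\varepsilon_0}_{M\times M})$.

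The hard part will be the commutator estimate. Writing $D^M_\varepsilon=D^{E^{M/B}}_{M\times_\pi M}\otimes 1\otimes 1+\varepsilon^{1/2}(1\otimes_\nabla D^B)\otimes 1-\varepsilon A$ as in \eqref{D-epsilon}, the graded commutator entering Lemma \ref{lemma-cs-degenerate} --- which for these odd operators is an anticommutator --- is governed by
\begin{equation*}
\left[D^{E^{M/B}}_{M\times_\pi M}\otimes 1,\ D^M_\varepsilon\right]=2\,(D^{M/B})^2\otimes 1+\varepsilon^{1/2}\left[D^{E^{M/B}}_{M\times_\pi M}\otimes 1,\ (1\otimes_\nabla D^B)\otimes 1\right]-\varepsilon\left[D^{E^{M/B}}_{M\times_\pi M}\otimes 1,\ A\right].
\end{equation*}
Here the leading term is bounded below by $2c>0$ through the Lichnerowicz formula for $g^{M/B}$, while the cross terms are controlled by the Bismut--Cheeger computation \cite[Equation 4.46]{BismutCheeger} (see also \cite{KaadVS}); choosing $\varepsilon=\varepsilon_0$ small enough absorbs the negative contributions, so that the bound $[F,F']-\lambda\geq 0$ with $\lambda>-2$ holds for the bounded transforms $F=\mathrm{sgn}(D^{E^{M/B}}_{M\times_\pi M}\otimes 1)$ and $F'=\mathrm{sgn}(D^M_{\varepsilon_0})$. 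Transferring this estimate from the unbounded operators to their sign functions, and thereby checking that the family $\cos(t)F+\sin(t)F'$ retains a spectral gap at zero along the whole homotopy, is the delicate technical point, and it is precisely where the invertible version of \cite[Lemma 11]{Sk-remarks} embodied in Lemma \ref{lemma-cs-degenerate} becomes indispensable.
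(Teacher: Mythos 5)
Your proposal is correct and follows essentially the same route as the paper's own proof: the mapping-cone product recipe \eqref{k-product-mc} plus the primary factorization $u_!=u^\pi_!\otimes\iota_!$ handles the adiabatic component, and the Bismut--Cheeger estimate \cite[Equation (4.46)]{BismutCheeger} feeds Lemma \ref{lemma-cs-degenerate} to produce the required homotopy through degenerate bimodules. The only small differences are that the paper makes explicit the observation that the class \eqref{product2} is independent of $\varepsilon$ (which justifies replacing the initial metric by $g^M_{\varepsilon_0}$), and that it applies Lemma \ref{lemma-cs-degenerate} directly to the unbounded invertible operators $D^{\varepsilon_0}_{M\times M}$ and $D_{M\times_\pi M}\otimes 1$, taking $\mathrm{sgn}$ of the resulting path $CS_s$, rather than first transferring the commutator bound to the sign functions as you suggest --- thereby sidestepping the ``delicate technical point'' you flag.
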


\begin{proof}
	Let us denote by $H$ the Lie groupoid $M\times_\pi M$ and by $G$ the Lie groupoid $M\times M$. Recall from \eqref{rho-bimodule} that $$\varrho(D_H)=\left[\left(\mathcal{E}(H_{ad}),\psi(D_{H_{ad}})\right)\left(\mathcal{E}(H)\otimes C_0[0,1],\psi_t(D_H)\right)\right]\in KK_n(\CC, C^*(H_{ad}^{[0,1)})).$$
	
	Moreover, as in \eqref{k-product-mc}, we have that $\varrho(D_{M\times_\pi M})\otimes \iota_!^{ad}\in KK_{n+k}(\CC, C^*(G_{ad}^{[0,1)}))$
	is represented   by 
	\begin{equation}\label{product1}
 	\left[\left(\mathcal{E}(G_{ad}),\psi(D_{G_{ad}})\right),\left(\mathcal{E}(G)\otimes C_0[0,1],CS_t(\psi(D_G),\mathrm{sgn}(D_H)\otimes1 )\right)\right].
	\end{equation}
    Observe that the class of $\iota_!$ does not depend on $\varepsilon$ which parametrizes the family of metrics $\varepsilon^{-1}g_B$ one can use to construct it. Then the family of bimodules
    \begin{equation}\label{product2}
   \left[	\left(\mathcal{E}(G_{ad}),\psi(D^\varepsilon_{G_{ad}})\right)\left(\mathcal{E}(G)\otimes C_0[0,1],CS_t(\psi(D^{\varepsilon}_G),\mathrm{sgn}(D_H)\otimes1 )\right)\right] 
    \end{equation} 
   defines the same class in $KK_{n+k}(\CC, C^*(G_{ad}^{[0,1)}))$ for any $\varepsilon$. 
   Observe now that \cite[Equation (4.46)]{BismutCheeger}, see also \cite[Lemma 17]{KvS2} for more details, tells us that there exists an $\varepsilon_0$ and $\lambda>-2 $ such that
   the commutator $[D_G^{\varepsilon_0}, D_H\otimes 1]-\lambda\geq0$ and that $D_G^{\varepsilon_0}$ is invertible.
   Thanks to Lemma \ref{lemma-cs-degenerate} 
       \begin{equation}
    \left[ \left(\mathcal{E}(G_{ad}),\psi(D^{\varepsilon_0}_{G_{ad}})\right),\left(\mathcal{E}(G)\otimes C_0[0,1],CS_t(\psi(D^{\varepsilon_0}_G),\mathrm{sgn}(CS_s(D_G^{\varepsilon_0},D_H\otimes 1 ))\right)\right]
       \end{equation} 
   gives a homotopy, parametrized by $s$, from \eqref{product2} to $\varrho(D^{\varepsilon_0}_{M\times M})$.	
\end{proof}

Observe now that the Lie groupoid $M\times_B M\rightrightarrows M$ is the holonomy groupoid $\mathrm{Hol}(\pi)$  associated with the foliation $(M,\ker d\pi)$. If the fibers are not simply connected, it differs from the monodromy groupoid $\mathrm{Mon}(\pi)$, whose $s$-fibers are diffeomorphic to the universal covering of the typical fiber of $\pi$. This last groupoid admits an immersion $\tilde{\iota}\colon \mathrm{Mon}(\pi)\looparrowright \widetilde{M}\times_\Gamma \widetilde{M}$ of Lie groupoids over $M$, where $\Gamma=\pi_1(M)$.
Notice that the proof of the previous theorem is obtained from local considerations about differential operators which adapt almost verbatim to the higher situation through equivariant lift. Hence, we can directly state the following more general result. 
\begin{theorem}
	Let $\pi\colon (M,g^M)\to (B,g^B)$ be a Riemannian submersion between $Spin$ manifolds. Let $g^{M/B}$ be a metric of $\ker d\pi$ with positive scalar curvature and let $\varrho(D_{\mathrm{Mon}(\pi)})\in KK_n(\CC, C^*(\mathrm{Mon}(\pi)_{ad}^{[0,1)}))$ be the associated $\varrho$-class. Let $\varepsilon_0$ be such that $g^M_{\varepsilon_0}$ has positive scalar curvature too. Then we have that 
	\begin{equation}\label{product-formula-fibrations-mon}
	\varrho(D_{\mathrm{Mon}(\pi)})\otimes \tilde{\iota}_!^{ad}=\varrho(D^{\varepsilon_0}_{\widetilde{M}\times_\Gamma \widetilde{M}})\in KK_{n+k}(\CC, C^*(\widetilde{M}\times_\Gamma \widetilde{M}_{ad}^{[0,1)})).
	\end{equation}
\end{theorem}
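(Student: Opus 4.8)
The plan is to lift the whole configuration $\Gamma$-equivariantly to the universal cover and then to invoke the descent results of Section \ref{s5}, so that the statement is reduced to the equivariant avatar of Theorem \ref{product-fibration}, whose proof is purely local and therefore survives the lift. First I would fix the equivariant model: the foliation $\ker d\pi$ pulls back to a $\Gamma$-invariant foliation on $\widetilde M$, with $\Gamma=\pi_1(M)$ acting freely and properly discontinuously; let $\widetilde G$ denote its monodromy groupoid, so that $\widetilde G/\Gamma=\mathrm{Mon}(\pi)$, while the pair groupoid satisfies $(\widetilde M\times\widetilde M)/\Gamma=\widetilde M\times_\Gamma\widetilde M$. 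The immersion $\tilde\iota$ is induced by a $\Gamma$-equivariant immersion $\overline\iota\colon\widetilde G\looparrowright\widetilde M\times\widetilde M$ whose normal bundle carries a $\Gamma$-invariant cocycle, so that Propositions \ref{lift-shriek} and \ref{lift-rho} apply. Fixing $\Gamma$-invariant lifts of the metric data produces the equivariant Dirac operators $D^\Gamma_{\widetilde G}$ and $D^{\varepsilon_0,\Gamma}_{\widetilde M\times\widetilde M}$ acting on the crossed-product Hilbert modules $\mathcal E^\rtimes$ of Section \ref{s5}, the latter invertible because $g^M_{\varepsilon_0}$ has positive scalar curvature (Lichnerowicz).

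Next I would transport both factors upstairs. By Proposition \ref{lift-rho},
\[
\varrho(D_{\mathrm{Mon}(\pi)})=\varrho(D^\Gamma_{\widetilde G})\otimes\mathcal M^{\widetilde G_{ad}}_\Gamma,\qquad
\varrho(D^{\varepsilon_0}_{\widetilde M\times_\Gamma\widetilde M})=\varrho(D^{\varepsilon_0,\Gamma}_{\widetilde M\times\widetilde M})\otimes\mathcal M^{(\widetilde M\times\widetilde M)_{ad}}_\Gamma,
\]
while by Proposition \ref{lift-shriek}
\[
(\tilde\iota^{[0,1)}_{ad})_!=(\mathcal M^{\widetilde G_{ad}}_\Gamma)^{-1}\otimes j^\Gamma\big((\overline\iota^{[0,1)}_{ad})_!\big)\otimes\mathcal M^{(\widetilde M\times\widetilde M)_{ad}}_\Gamma.
\]
Substituting these, using associativity of the Kasparov product and the cancellation $\mathcal M^{\widetilde G_{ad}}_\Gamma\otimes(\mathcal M^{\widetilde G_{ad}}_\Gamma)^{-1}=1$, and then cancelling the common invertible right factor $\mathcal M^{(\widetilde M\times\widetilde M)_{ad}}_\Gamma$, the formula \eqref{product-formula-fibrations-mon} collapses to the single equivariant identity
\[
\varrho(D^\Gamma_{\widetilde G})\otimes j^\Gamma\big((\overline\iota^{[0,1)}_{ad})_!\big)=\varrho(D^{\varepsilon_0,\Gamma}_{\widetilde M\times\widetilde M})\in KK\big(\CC,C^*((\widetilde M\times\widetilde M)^{[0,1)}_{ad})\rtimes\Gamma\big).
\]

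I would then prove this identity by repeating the argument of Theorem \ref{product-fibration} in the equivariant framework. Following the mapping-cone recipe \eqref{k-product-mc}, the left-hand side is represented, exactly as in the non-equivariant case, by a pair of bimodules built from $D^\Gamma_{\widetilde G}$ together with a $CS_t$-homotopy, and $j^\Gamma((\overline\iota)_!)$ is independent of the scaling parameter $\varepsilon$. The Bismut--Cheeger factorization \eqref{factorization} and the commutator estimate of \cite[Eq. (4.46)]{BismutCheeger} are pointwise statements on the compact base $M$; since $\Gamma$ acts by isometries of the lifted data, they hold for the $\Gamma$-invariant operators on $\widetilde M$ with $\Gamma$-uniform constants. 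Hence there exist $\varepsilon_0$ and $\lambda>-2$ with $[D^{\varepsilon_0,\Gamma}_{\widetilde M\times\widetilde M},D^\Gamma_{\widetilde G}\otimes1]-\lambda\ge0$ and $D^{\varepsilon_0,\Gamma}_{\widetilde M\times\widetilde M}$ invertible, so Lemma \ref{lemma-cs-degenerate}, applied to these regular invertible operators on $\mathcal E^\rtimes$, produces the homotopy through invertible operators connecting the product to $\varrho(D^{\varepsilon_0,\Gamma}_{\widetilde M\times\widetilde M})$.

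The hard part will be verifying that the analytic input survives the passage to the crossed-product modules: one must check that the lifted operators are regular on $\mathcal E^\rtimes$, that the Lichnerowicz invertibility and the Bismut--Cheeger commutator bound remain valid there with the same $\Gamma$-uniform constants, and that Lemma \ref{lemma-cs-degenerate} can be invoked in this equivariant setting. All of these reduce to pointwise estimates on the compact manifold $M$, which are manifestly $\Gamma$-invariant; what remains is the bookkeeping of imprimitivity bimodules, governed entirely by Propositions \ref{lift-rho} and \ref{lift-shriek}.
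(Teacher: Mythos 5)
Your proof is correct and takes essentially the same route as the paper: the paper justifies this theorem by remarking that the proof of Theorem \ref{product-fibration} consists of local considerations that adapt almost verbatim through equivariant lift, which is precisely what you carry out. Moreover, the detailed bookkeeping you supply — descent via Propositions \ref{lift-rho} and \ref{lift-shriek} to reduce everything to the single equivariant identity, followed by the $CS_t$/Bismut--Cheeger commutator argument and Lemma \ref{lemma-cs-degenerate} applied upstairs with $\Gamma$-uniform constants — is exactly the argument the paper itself spells out in the proof of the closely related Theorem \ref{main-th}.
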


%Consider a further fibration $\pi'\colon B\to B'$. Then we obtain the following three Lie groupoids: $$H:=\mathrm{Mon}(\pi)\rightrightarrows M\quad, \quad K:=\mathrm{Mon}(\pi')\rightrightarrows B \quad \mbox{and}\quad G:=\mathrm{Mon}(\pi'\circ\pi)\rightrightarrows M$$ we will keep this notation all along this subsection.
%
%Moreover we can easily identify their Lie algebroids: $$\mathfrak{A}H= \ker d\pi\quad, \quad\mathfrak{A}K= \ker d\pi'\quad\mbox{and}\quad\mathfrak{A}G= \ker d(\pi'\circ \pi)\cong \ker d\pi\oplus \pi^*\ker d\pi'.$$
%

\subsection{Foliated bundles}
Let $\Gamma$  be a discrete group of isometries acting on a smooth Riemannian manifold $(\bar{M},g^{\bar{M}})$ freely, properly discontinuously and so that $(M,g^M):=(\bar{M},g^{\bar{M}})/\Gamma$ is a compact smooth Riemannian manifold. 
Suppose that we also have an isometric action of $\Gamma$ on a compact Riemannian manifold $(B, g^B)$ and
a surjective Riemannian submersion $\pi\colon \bar{M}\to B$, which is $\Gamma$-equivariant. The simple foliation of $\bar{M}$ associated with the involutive sub-bundle $\ker d\pi\subset T\bar{M}$ is invariant under the action of $\Gamma$ and hence it induces the quotient foliation $\mathcal{F}(\pi, \Gamma):=\ker d\pi/\Gamma$ on $M$. Now, as in the previous section, $g^{\bar{M}}$ is equal to $ g^{\bar{M}/B}\oplus\pi^* g^B$, where 
$g^{\bar{M}/B}$ is a metric on $\ker d\pi$ which induces a metric $g^{\mathcal{F}}$ on the quotient sub-bundle $\mathcal{F}(\pi,\Gamma)\subset TM$.

If $\Gamma_b\subset \Gamma$ is the isotropy group at $b\in B$ of the action of $\Gamma$ on $B$, then the leaf of $(M,\mathcal{F}(\pi, \Gamma))$  obtained from the leaf $\pi^{-1}(b)$ of $(\bar{M}, \ker d\pi)$ is naturally diffeomorphic to $\pi^{-1}(b)/\Gamma_b$.

Observe that $\Gamma$ induces a free and properly discontinuous action via groupoid automorphisms on the Lie groupoids associated with the foliation on $\bar{M}$ and hence we have the following identifications of Lie groupoids over  $M$
\begin{equation*}
\mathrm{Mon}(M,\mathcal{F}(\pi, \Gamma))\cong \mathrm{Mon}(\bar{M},\ker d\pi)/\Gamma\, \,\mbox{and}\, \,\mathrm{Hol}(M,\mathcal{F}(\pi, \Gamma))\cong \mathrm{Hol}(\bar{M},\ker d\pi)/\Gamma
\end{equation*}
for the monodromy and the holonomy groupoid, respectively. See \cite[Example 5.8]{MoeMcr}, for instance. 
Finally, it is worthy to point out that the Lie algebroid of both these groupoids is given by $\ker d\pi/\Gamma\to M$.

%\begin{remark}\label{morita-cross}
%	Recall that for a Lie groupoid $G\rightrightarrows G^{(0)}$ endowed with a free and properly discontinuous action via automorphism of a discrete group $\Gamma$ the C*-algebras  $C^*(G/\Gamma)$ and $C^*(G\rtimes\Gamma)$  are Morita equivalent.
%	Moreover $C^*(G\rtimes\Gamma)\cong C^*(G)\rtimes\Gamma$.
%\end{remark}

Observe now that there is a commutative diagram of Lie groupoid morphisms
\begin{equation}
\xymatrix{ \mathrm{Mon}(\bar{M},\ker d\pi) \ar[r]^{\bar{\iota}}\ar[d]& \mathrm{Mon}(\bar{M},T\bar{M})\ar[d]\\
\mathrm{Mon}(M,\mathcal{F}(\pi, \Gamma))\ar[r]^{\iota}&\mathrm{Mon}(M,TM)}
\end{equation}
where the horizontal arrows are immersions and the vertical ones are quotient maps.

\begin{theorem}\label{main-th}
	In the previous geometric situation, let  the metric $g^{\mathcal{F}}$ on $\ker d\pi/\Gamma$  be a metric with positive scalar curvature. Let us assume that both $M$ and $B$ are endowed with a $\Gamma$-invariant $Spin$ structure. Then there exists an $\varepsilon>0$ such that 
	\begin{equation}\label{product-fol}
	\varrho(D_{g^{\mathcal{F}}})\otimes (\iota_{ad}^{[0,1)})_!= \varrho(D_{g_\varepsilon^M})\in KK(C,C^*(\mathrm{Mon}(M,TM)_{ad}^{[0,1)}))
    \end{equation}
    where $g_\varepsilon^M$ is induced by $g_{\varepsilon}^{\bar{M}}=g^{\bar{M}/B}\oplus \varepsilon^{-1}g^B$.
\end{theorem}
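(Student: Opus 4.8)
The plan is to transport the identity \eqref{product-fol} to the $\Gamma$-equivariant cover $\bar{M}\to B$, where $\pi$ is a genuine Riemannian submersion, and there to rerun the local argument of Theorem \ref{product-fibration}. Write $\bar{H}:=\mathrm{Mon}(\bar{M},\ker d\pi)$ and $\bar{G}:=\mathrm{Mon}(\bar{M},T\bar{M})$, so that $\mathrm{Mon}(M,\mathcal{F}(\pi,\Gamma))=\bar{H}/\Gamma$ and $\mathrm{Mon}(M,TM)=\bar{G}/\Gamma$, and let $\bar{\iota}\colon\bar{H}\looparrowright\bar{G}$ be the $\Gamma$-equivariant immersion integrating $\ker d\pi\hookrightarrow T\bar{M}$. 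The Lichnerowicz formula together with the positivity of the scalar curvature of $g^{\mathcal{F}}$, which is uniform along the leaves since $M$ is compact, makes the longitudinal operator $D_{g^{\mathcal{F}}}$ invertible, so that $\varrho(D_{g^{\mathcal{F}}})$ is defined; moreover the scalar curvature of $g^{\bar{M}}_\varepsilon$ converges as $\varepsilon\to 0$ to the uniformly positive fibrewise scalar curvature, so for $\varepsilon$ small enough $g^M_\varepsilon$ has positive scalar curvature and $\varrho(D_{g^M_\varepsilon})$ is defined as well.

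First I would carry out the reduction. Proposition \ref{lift-rho}, applied in its adiabatic form, relates the quotient $\varrho$-classes $\varrho(D_{g^{\mathcal{F}}})$ and $\varrho(D_{g^M_\varepsilon})$ to the equivariant ones $\varrho(D^\Gamma_{\bar{H}})$ and $\varrho(D^\Gamma_{\bar{G}})$ through the invertible imprimitivity bimodules $\mathcal{M}^{\bar{H}_{ad}}_\Gamma$ and $\mathcal{M}^{\bar{G}_{ad}}_\Gamma$, while Proposition \ref{lift-shriek} relates $(\iota_{ad}^{[0,1)})_!$ to $j^\Gamma\!\left((\bar{\iota}_{ad}^{[0,1)})_!\right)$ through the same bimodules. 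Substituting both relations into \eqref{product-fol} and cancelling the common invertible factor $\mathcal{M}^{\bar{G}_{ad}}_\Gamma$ reduces the theorem to the equivariant covering-level identity
\begin{equation}\label{covering-level}
\varrho(D^\Gamma_{\bar{H}})\otimes j^\Gamma\!\left((\bar{\iota}_{ad}^{[0,1)})_!\right)=\varrho(D^\Gamma_{\bar{G}})\in KK\!\left(\CC,\,C^*(\bar{G}_{ad}^{[0,1)})\rtimes\Gamma\right).
\end{equation}

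Then I would prove \eqref{covering-level} by repeating the proof of Theorem \ref{product-fibration} $\Gamma$-equivariantly. The Bismut--Cheeger factorization \eqref{factorization}, its groupoid translation \eqref{D-epsilon}, and the commutator estimate of \cite[Equation (4.46)]{BismutCheeger} are pointwise statements along the $s$-fibres, expressed purely through the connection data of $\pi$; since $\Gamma$ acts by isometries preserving $\pi$ and the $Spin$ structures, all of this data lifts $\Gamma$-equivariantly to the monodromy groupoids of $\bar{M}$ and to the Hilbert $C^*(\bar{G}_{ad}^{[0,1)})\rtimes\Gamma$-modules of Section \ref{s5}. Representing the left-hand side of \eqref{covering-level} by the mapping-cone recipe \eqref{k-product-mc} gives the equivariant analogue of \eqref{product2}; by \cite[Equation (4.46)]{BismutCheeger} there are $\varepsilon_0>0$ and $\lambda>-2$ with $[D^{\Gamma,\varepsilon_0}_{\bar{G}},\,D^\Gamma_{\bar{H}}\otimes 1]-\lambda\geq 0$ and $D^{\Gamma,\varepsilon_0}_{\bar{G}}$ invertible, so Lemma \ref{lemma-cs-degenerate} yields the homotopy through invertibles $s\mapsto\mathrm{sgn}\!\left(CS_s(D^{\Gamma,\varepsilon_0}_{\bar{G}},\,D^\Gamma_{\bar{H}}\otimes 1)\right)$ connecting this representative to $\varrho(D^{\Gamma,\varepsilon_0}_{\bar{G}})=\varrho(D^\Gamma_{\bar{G}})$, which is exactly \eqref{covering-level}.

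The hard part will be this last step: ensuring that the pointwise commutator estimate and the invertibility of $D^{\Gamma,\varepsilon_0}_{\bar{G}}$ persist after passing to the non-compact cover $\bar{M}$ and to the crossed-product module, where they must hold as genuine operator inequalities in $\mathbb{B}(\mathcal{E})$ and not merely modulo compacts, as demanded by Lemma \ref{lemma-cs-degenerate}. This is precisely where the compactness of $M$ is essential: it upgrades the fibrewise positive scalar curvature to a uniform spectral gap for $D^\Gamma_{\bar{H}}$ and, for $\varepsilon_0$ small, for $D^{\Gamma,\varepsilon_0}_{\bar{G}}$, and it makes the zero-order defect $A$ of \eqref{D-epsilon} uniformly bounded; together these ensure the hypotheses of Lemma \ref{lemma-cs-degenerate} on the equivariant Hilbert module, so that the argument of Theorem \ref{product-fibration} transfers without change.
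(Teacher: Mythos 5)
Your proposal is correct and follows essentially the same route as the paper: it first reduces to the $\Gamma$-equivariant statement on the cover via Propositions \ref{lift-shriek} and \ref{lift-rho}, cancelling the invertible imprimitivity bimodules, and then reruns the argument of Theorem \ref{product-fibration} equivariantly, representing the product by the mapping-cone recipe \eqref{k-product-mc} and using the Bismut--Cheeger commutator estimate together with Lemma \ref{lemma-cs-degenerate} to produce the homotopy through invertibles. Your added remarks on why compactness of $M$ upgrades the fibrewise positivity to uniform operator inequalities on the crossed-product module make explicit a point the paper leaves implicit, but the underlying strategy is the same.
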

\begin{proof}
	First, let us  fix the following notation $G:=\mathrm{Mon}(\bar{M},\ker d\pi)$, $G/\Gamma:=\mathrm{Mon}(M,\mathcal{F}(\pi, \Gamma))$,   $H:=\mathrm{Mon}(\bar{M},T\bar{M})$  and $H/\Gamma:=\mathrm{Mon}(M,TM)$. 
Observe that, thanks to Proposition \ref{lift-shriek} and Proposition \ref{lift-rho}  we have that \begin{equation}
\begin{split}
&\varrho(D_{G/\Gamma})\otimes  (\iota_{ad}^{[0,1)})_!=\\  =&\varrho(D_{G/\Gamma})\otimes (\mathcal{M}_G^\Gamma)^{-1}\otimes\mathcal{M}_G^\Gamma\otimes(\iota_{ad}^{[0,1)})_!\otimes(\mathcal{M}_H^\Gamma)^{-1} \otimes\mathcal{M}_H^\Gamma=\\
=& \varrho(D_G^\Gamma)\otimes j^\Gamma((\bar{\iota}_{ad}^{[0,1)})_!)\otimes\mathcal{M}_H^\Gamma=\\
=& \varrho(D_H^\Gamma)\otimes\mathcal{M}_H^\Gamma= \varrho(D_H),
\end{split}
\end{equation}
where the only thing to prove is the equality 
\begin{equation}
\varrho(D_G^\Gamma)\otimes j^\Gamma((\bar{\iota}_{ad}^{[0,1)})_!)= \varrho(D_H^\Gamma)\in KK(\CC, C^*(H_{ad}^{[0,1})\rtimes\Gamma).
\end{equation}
Thanks to \eqref{lift-dirac} and \eqref{lift-shriek}, by using the functoriality of the descent map $j^\Gamma$, it is easy to see that $\varrho(D_G^\Gamma)\otimes j^\Gamma((\bar{\iota}_{ad}^{[0,1)})_!)$ is represented, as in \eqref{mc-class}, by  the pair of bimodules 
\begin{equation}
\left[\left(pt^*\mathcal{E}(H)_{ad}^\rtimes, \psi(D^\Gamma_{H_{ad}})\right),\left(pt^*\mathcal{E}^\rtimes(H)\otimes C[0,1],CS_t(\psi(D^\Gamma_{H}),\mathrm{sgn}(D^\Gamma_G)\otimes 1\right)\right]
\end{equation}
From now the proof follows as from  \eqref{product1} on and we get the desired result.
\end{proof}

\begin{remark}
	Let us consider the more general situation where we have a sequence of $\Gamma$-equivariant surjective Riemannian submersions 
	\begin{equation}
	\xymatrix{\bar{M}\ar[r]^{\pi}& B \ar[r]^q& B' }
 \end{equation}
 and let denote by $\pi'$ the composition $q\circ \pi$.
 Then we have an inclusion of foliations $\ker d\pi/\Gamma\hookrightarrow \ker d\pi'/\Gamma$ over $M$ and then an immersion of Lie groupoids  $$\iota\colon \mathrm{Mon}(M,\mathcal{F}(\pi, \Gamma))\looparrowright \mathrm{Mon}(M,\mathcal{F}(\pi', \Gamma)).$$
Observe that if $B'=pt$ we recover the situation of Theorem \ref{main-th}.
Let $g^\pi$ a longitudinal metric of positive scalar curvature on $\ker d\pi/\Gamma$ and let $g^{\pi'}:=g^\pi\oplus g^{B'}$ be a metric on $\ker d\pi'/\Gamma$ with positive scalar curvature too, then the proof of the following product formula 
\begin{equation}
\varrho(D_{g^{\pi'}})=\varrho(D_{g^{\pi}})\otimes (\iota_{ad}^{[0,1)})_!\in KK_*\left(\CC,C^*(\mathrm{Mon}(M,\mathcal{F}(\pi', \Gamma))_{ad}^{[0,1)})\right)
\end{equation}
follows without substantial changes as the proof of Theorem \ref{main-th}. The only thing to observe is that the estimation of the commutator between the operator of the smaller foliation and the bigger one works exactly as in \cite[Equation (4.46)]{BismutCheeger}.
\end{remark}

\subsection{Functoriality of $\varrho$-classes through \'{e}tale surjections}
In this section we give a very conceptual proof of a generalization to the setting of foliations of \cite[Theorem 1.1]{GXY}. Let $M$ be a smooth compact manifold and let $\mathcal{F}$ be an involutive sub-bundle of $TM$, namely the tangent bundle of a regular foliation on $M$. Consider two $s$-connected Lie groupoids $G$ and $H$ over $M$ integrating the Lie algebroid $\mathcal{F}$ and assume that there exists a Lie groupoid surjective homomorphism $\varphi\colon G\to H$ which integrates the identity map on $\mathcal{F}$. Notice that  $\varphi$ is an \'{e}tale immersion of Lie groupoids, then the normal bundle groupoid of $\varphi$ is $G$ it-self. So we have the following equality of asymptotic morphism classes
\begin{equation}
(\varphi_{ad}^{[0,1)})_!= [\mathrm{ev}_0]^{-1}\otimes [\mathrm{ev}_1]\in E\left(C^*(G_{ad}^{[0,1)}),C^*(H_{ad}^{[0,1)})\right),
\end{equation}
where $\mathrm{ev}_0\colon DNC^{[0,1)}(G_{ad}^{[0,1)},H_{ad}^{[0,1)})\to G_{ad}^{[0,1)}$ and $\mathrm{ev}_1\colon DNC^{[0,1)}(G_{ad}^{[0,1)},H_{ad}^{[0,1)})\to H_{ad}^{[0,1)}$.
Assume that $\mathcal{F}$ is $Spin$  and that it is endowed with a metric $g^{\mathcal{F}}$ with positive scalar curvature. Then we have the following functoriality result. 
\begin{theorem}
	Let $D_G$ and $D_H$ the $Spin$ Dirac operators associated with the metric $g^{\mathcal{F}}$ on $G$ and $H$ respectively, then the following equality holds
	\begin{equation}
	\varrho(D_G)\otimes (\varphi_{ad}^{[0,1)})_!= \varrho(D_H).
	\end{equation}
\end{theorem}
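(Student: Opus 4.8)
The plan is to run the argument of Theorem \ref{product-fibration} while observing that the étale hypothesis makes the transverse direction disappear, so that the whole proof collapses to endpoint bookkeeping. First I would record the consequences of $\varphi$ integrating the identity on $\mathcal{F}$: the differential $d\varphi$ is then an isomorphism, so $\varphi$ is étale, its normal bundle $\mathcal{N}_\varphi$ has rank $0$ and coincides with $G$, and the associated Bott element $\beta(\varphi)$ is trivial. Hence $(\varphi_{ad}^{[0,1)})_!$ reduces to the pure deformation-to-the-normal-cone asymptotic morphism $\partial(\varphi) = [\mathrm{ev}_0]^{-1} \otimes [\mathrm{ev}_1]$, exactly as in the display preceding the statement.

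The geometric input is that $D_G$ and $D_H$ are one and the same longitudinal $Spin$ Dirac operator for $g^{\mathcal{F}}$: on $s$-fibres $\varphi_x\colon G_x \to H_x$ is a covering map and $D_G$ is the leafwise lift of $D_H$ along it, with identical symbol because $d\varphi = \mathrm{id}$. I would use this to produce a Dirac-type operator on the relevant deformation groupoid whose restrictions at the two endpoints of the $DNC$ are $D_{G_{ad}}$ and $D_{H_{ad}}$ (respectively $D_G$ and $D_H$ on the non-adiabatic part), the extension being possible precisely because the common symbol propagates across the deformation. Since $g^{\mathcal{F}}$ has positive scalar curvature, the Lichnerowicz formula keeps this operator invertible throughout the deformation, so its sign is a degenerate bimodule. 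Evaluating at $t=0$ and $t=1$ then shows that $\partial(\varphi)$ carries $\psi(D_{G_{ad}})$ to $\psi(D_{H_{ad}})$ and, crucially, the degenerate bimodule $(\mathcal{E}(G), \mathrm{sgn}(D_G))$ to the degenerate bimodule $(\mathcal{E}(H), \mathrm{sgn}(D_H))$.

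With these identifications I would represent $\varrho(D_G)$ by the pair in \eqref{rho-bimodule} and form $\varrho(D_G) \otimes (\varphi_{ad}^{[0,1)})_!$ following the recipe \eqref{k-product-mc}. Because $\mathcal{N}_\varphi$ is trivial there is no transverse tensor factor: the product of the degenerate part $\mathrm{sgn}(D_G)$ with $\varphi_!$ equals $\mathrm{sgn}(D_H)$ on the nose, not merely up to homotopy. Therefore the degenerate bimodule sitting at the end of the resulting mapping cone class is already the degenerate part of $\varrho(D_H)$, and the connecting homotopy $CS_t(\psi(D_H), \mathrm{sgn}(D_H))$ is exactly the homotopy $\psi_t(D_H)$ of \eqref{rho-bimodule}. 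This gives $\varrho(D_G) \otimes (\varphi_{ad}^{[0,1)})_! = \varrho(D_H)$ at once, without invoking Lemma \ref{lemma-cs-degenerate}; the conceptual simplification over \cite{GXY} is precisely that the étale assumption renders the Bismut--Cheeger commutator estimate vacuous.

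The step I expect to be the main obstacle is the construction and control of the Dirac operator on the deformation groupoid: one has to verify that the longitudinal operator extends to a regular self-adjoint operator over the entire $DNC$ with the prescribed endpoint restrictions, and that its invertibility --- hence the degeneracy of its sign --- is uniform in the deformation parameter. Once that is secured, the remainder is a direct transcription of the endpoint computation already performed in the proof of Theorem \ref{product-fibration}.
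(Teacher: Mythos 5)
Your proposal is correct and takes essentially the same route as the paper: the paper's proof consists precisely of the observation that the invertible Dirac operator on the deformation groupoid $DNC(G_{ad}^{[0,1)},H_{ad}^{[0,1)})$, associated with the pull-back of $g^{\mathcal{F}}$, restricts to $D_G$ at $0$ and to $D_H$ at $1$, which is exactly the operator you construct. Your write-up simply makes explicit the endpoint bookkeeping, the triviality of the Bott factor, and the uniform invertibility via positive scalar curvature that the paper compresses into a one-line argument.
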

\begin{proof}
	The result follows immediately by noticing that the invertible Dirac operator on the deformation groupoid $DNC(G_{ad}^{[0,1)},H_{ad}^{[0,1)})$, associated with the metric given by the pull-back of $g^{\mathcal{F}}$, restricts to $D_G$ at $0$ and to $D_H$ at $1$.
\end{proof}

\begin{remark}
	When $\mathcal{F}=TM$ the Lie groupoids $G$ and $H$ are of the form $\bar{M}\times_\Lambda\bar{M}\rightrightarrows M$, where $\bar{M}$ is a Galois $\Lambda$-covering of $M$ and we recover \cite[Theorem 1.1]{GXY} when $M$ is compact.
\end{remark}

\section{Open problems}\label{s7}

In this section we are going to list a series of open questions which arise from this paper.
\begin{enumerate}
	\item The first open question comes from the fact that the main result of this paper, namely Theorem \ref{main-th}, is proved for Riemannian foliated bundles. So it is natural to ask for a proof in the context of general foliations which non necessarily admit an isometric normal structure. Notice that in Section \ref{ai} we just give the construction of the adiabatic transverse class for foliations endowed with an almost isometric structure, but the general case could be treated as usual by passing to the Connes' fibration, as it is done in \cite{HSk}. 
	\item Observe now that Definition \ref{rho-def} involves a perturbation which did not appear in the applications of the present work. Indeed, an open problem which will be treated in a future work is to provide a proof of the product formula \eqref{product-fol} for $\varrho$-classes associated with perturbed generalized Dirac operators, such as those given by the signature operator on homotopy equivalent foliations, see \cite[Section 3.4]{zenobi-ad}.
	\item The natural application of these product formulas are given by stability results as in \cite[Section 3.6.1]{zenobi-ad}: namely asking if two longitudinal metric with positive scalar curvature which are not longitudinally concordant stay so if completed to metric with positive scalar curvature on the whole manifold. Similar questions could be asked about foliated homotopy equivalences. In order to prove this kind of results, it is necessary to prove the injectivity of the Kasparov product in \eqref{product-fol}, namely finding suitable hypotheses for the existence of a right inverse (in the notation of Kasparov product) for the lower shriek class between adiabatic deformation groupoids.
	\item Finally, it would be extremely interesting to construct concrete geometrical examples corresponding to non trivial factorizations of $\varrho$-classes as in \eqref{product-fol}. 
\end{enumerate}

\noindent\textbf{Acknowledgments}
I would like to thank Paolo Piazza and Georges Skandalis for interesting discussions about this subject. 

		\addcontentsline{toc}{section}{References}
		\bibliographystyle{plain}
		\nocite{*}
		\bibliography{Product_formulas}
		
\end{document}